\newtheorem{proposition}{Proposition}[section]
\newtheorem{definition}{Definition}[section]
\newtheorem{theorem}{Theorem}[section]
\newtheorem{corollary}{Corollary}[section]
\newtheorem{lemma}{Lemma}[section]
\newtheorem{remark}{Remark}[section]
 \theoremstyle{plain}
\pgfplotsset{compat=1.14}
\numberwithin{equation}{section}
\begin{document}

\begin{center}
\Large{\textbf{Global Existence and Mass Decay Analysis of solutions to the discrete Redner-Ben-Avraham-Kahng coagulation model}}
\end{center}
\medskip
\medskip
\centerline{${\text{Pratibha~Verma}}$ }
\centerline{${\text{pverma@ma.iitr.ac.in}}$}
\medskip
{\footnotesize

  \centerline{ ${}^{1}$Department of Mathematics, Indian Institute of Technology Roorkee,}
   \centerline{Roorkee-247667, Uttarakhand, India}

}

\bigskip

\section*{Abstract}
The Redner-Ben-Avraham-Kahng (RBK) coagulation model provide a fundamental framework for modeling the aggregation of particles in various physical and biological systems. In this paper, we investigate the global existence of solutions to the discrete version of RBK coagulation equations, encompassing a wide range of coagulation kernels. Furthermore, we demonstrate that the mass decays with time and will eventually reach to zero even if the original mass is not finite. A small size particle is produced
in the RBK coagulation model when two large particles collide with a slight size difference.
Dealing with the role played by large size particles in the creation of small size
particles was the major challenge of this research.

\medskip

\textbf{Keywords :} Coagulation; Fragmentation; Redner-Ben-Avraham-Kahng Coagulation Model; Existence.\\

\textbf{2020 Mathematics subject classification}: 34A12, 34K30, 34A34, 46B50, 26D07.

\section{Introduction}
The word ``particulate processes" is frequently used in mathematical models of systems involving a large number of individual particles or things that interact with one another. These systems can be used to simulate a wide range of physical, chemical, biological, and societal events.  Coagulation, fragmentation, annihilation, and growth are a few types of particulate processes, some of which are described below.

The dynamics of cluster formation involve the processes of coagulation and fragmentation, which determine how clusters combine to create larger ones and split into smaller fragments, respectively. In most fundamental coagulation models, the cluster size (or volume), which can be continuous \cite{Muller} or discrete \cite{Smoluchowski}, separates cluster particles based on physical parameters. Particle fragmentation occurs frequently in both natural and man-made processes. For instance, weathering and erosion can reduce rocks to smaller fragments, and chemical reactions can reduce larger molecules to smaller ones. A larger particle is reduced into smaller pieces or particles during this process. Different mechanisms, such as physical forces, chemical processes, or radiation, can cause this. The term "annihilation process" alludes to a fundamental physical process in which a particle collides with its antiparticle and mutually destroys. The annihilation process is used in particle physics to study particle properties and interactions, such as the Standard Model, upersymmetry, Dark Matter Annihilation, and Extra Dimensions. It can also describe the energy output of astronomical objects such as black holes, neutron stars, and quasars.

The coagulation-fragmentation type mean field models, of which Smoluchowski's coagulation (SC) system is a prototype, have drawn the attention of numerous researchers \cite{ball, stewart1, stewart2}. The modeling of the Smoluchowski coagulation equation is as follows: When a particle of size $j$, referred to as a $j$-cluster (a cluster composed of $j$ identical particles), interacts with a particle of size $k$, the resultant particle has a size of $(j+k)$. Due to the interaction of only two clusters, this reaction is binary. In this process, the average cluster size typically increases with time. The fundamental dynamic process of fragmentation, in contrast to coagulation, involves the phenomenon that a particular j-cluster splits into either two or more smaller clusters, where the average size of the cluster decreases over time. 

Let us now turn our attention to the mathematical formulation of the model in this study. For this, first consider a closed system of particles colliding in binary collisions in which the smaller particle completely annihilates and destroys the same quantity of the larger particle. This model is known as the Redner–Ben-Avraham–Kahng (RBK) coagulation model. In the RBK coagulation system, which is a cluster-eating model, two particles collide to form a single particle (similar to coagulation), but the resulting particle shrinks in size (similar to fragmentation). This model's working process is as follows: when a particle of size $i\ge 1$ collides with a particle of size $j\ge 1$, the resulting particle has size $|i-j|$. Throughout this paper, we will assume $i $ and $j$ are the integers and each particle of size $i \ge 1$ contains $i$ monomers. 

The finite dimensional RBK coagulation model was first proposed by Sidney Redner, Daniel ben-Avraham, and Bong June Kahng in 1984 \cite{Redner1} (also see \cite{Redner2}), and then further studies by da Costa et al. in \cite{dacosta1,dacosta2}, as an infinite RBK coagulation system. This is the infinite-dimensional ODE system that represents the rate of change of the concentration of cluster size $i$ with respect to time $t\ge 0$ and is written as
\begin{align}
\frac{df_i}{dt}&=\sum_{j=1}^{\infty}\theta_{i+j,j}f_{i+j} f_j-\sum_{j=1}^{\infty}\theta_{i,j}f_{i} f_j, \label{drbk}\\
f_i(0)&=f_i^{\text{in}}\ge 0, \label{drbkic}
\end{align}
for $i\ge 1$. Here, $\theta_{i,j}$ represents the rate of collision between particles of sizes $i\ge 1$ and $j\ge 1$ and is also referred to as the coagulation kernel, which is assumed to be positive and symmetric. Additionally, $f_i$ represents the concentration of particle size $i\ge 1$. In \eqref{drbk}, the first term on the right-hand side denotes the birth of particles of size $i$ which is produced when particles of sizes $(i+j)$ and $j$ collide, and the second term denotes the death of particles of size $i$ which is caused by the collision of particles of any sizes with those of size $i$.

Before we proceed to the coming section, it is necessary to define the moments of concentration (or number density function) to the discrete coagulation models. The $p^{th}$ moment of concentration $f=(f_i)_{i\ge 1}$, if it exists, is defined as:
\begin{align}\label{dmoment}
\mathcal{M}_p(f(t))=\mathcal{M}_p(t)=\sum_{i=1}^{\infty} i^p f_i(t), \ \ \ \ \ \ p\ge 0.
\end{align}

The total number of particles in the system at time $t$ is represented by the zeroth moment $\mathcal{M}_0(t)$. In the coagulation process, it is well known that the zeroth moment is a decreasing function with respect to time. Furthermore, the first moment $\mathcal{M}_1(t)$ signifies the total mass of the particles present in the system at time $t$. 

The RBK coagulation model and the Smoluchowski coagulation model \cite{Smoluchowski} are two distinct mathematical frameworks for describing particle coagulation or aggregation. Both seek to capture the behavior of particle interactions. The absence of the mass conservation property in \eqref{drbk}-\eqref{drbkic} the fundamental difference between RBK and Smoluchowski coagulation models. Unlike the Smoluchowski coagulation model, where mass is conserved over time, the RBK model exhibits a decrease in mass due to annihilation processes.

The RBK model has been explored in the context of understanding the dynamics of vicious civilizations in the study conducted by Ispolatov et al. \cite{Redner2}. Subsequently, da Costa et al. in their work \cite{dacosta1} established the existence and uniqueness of solutions for the infinite-dimensional discrete RBK coagulation model, considering a broad range of conditions on the coagulation coefficients. They also demonstrated the differentiability of solutions and their continuous dependence on the initial data. Notably, several significant invariance properties were proven in their study. Furthermore, a comprehensive analysis of the long-term behavior and scaling behavior of the solutions was conducted. The authors of \cite{dacosta2} specifically investigated the behaviour of the solutions to the discrete RBK coagulation system for the large-time. They focused on non-negative compactly supported input data and utilized the aforementioned invariance properties to transform the system into a finite-dimensional differential equation. The study of the continuous version of the RBK model has been explored in research papers \cite{ankik, ankik1}.

In this article, our main objective is to examine the global existence of solutions to the discrete RBK coagulation system \eqref{drbk}-\eqref{drbkic} by considering a particular class of coagulation rates that can be expressed in the following form:
\begin{eqnarray*}
\theta_{i,j}=\omega_i \omega_j+\kappa_{i,j}.
\end{eqnarray*}

The coagulation rates in the system satisfy $\kappa_{i,j} \le A \omega_i \omega_j$ for some $A>0$. It is important to emphasize that the sequence $(\omega_i)_{i \ge 1}$ made up of non-negative real numbers that can exhibit sub-linear, linear, or super-linear growth with respect to $i$. Further details regarding these assumptions are given by \eqref{ker1}-\eqref{ker3} in the next section. Specifically, the proofs of Theorems \ref{thm2.1} and \ref{thm2.2} are motivated from the proofs of existence results presented in \cite{Ali, dacosta1, Laurencot0}. In terms of appearance, the RBK coagulation model bears resemblance to the Smoluchowski coagulation model. However, the primary challenge in this study was addressing the contribution of large particles to the formation of small particles within the RBK coagulation model, which occurs when two large particles collide with only a slight disparity in size.

The paper is structured as follows: In Section 2, we provide the definition of a solution and establish the necessary assumptions on the coagulation coefficients. This section also outlines the main results of the article, namely Theorems \ref{thm2.1} and \ref{thm2.2}. Section 3 presents preliminary results that will be utilized in the proofs of the main theorems. Sections 4 and 5 are dedicated to proving Theorems \ref{thm2.1} and \ref{thm2.2}, respectively.


\section{Main results}

To conduct the mathematical analysis of equations \eqref{drbk}-\eqref{drbkic}, it is necessary to carefully consider the appropriate function spaces that are suitable for studying the system. We shall take into account the Banach space $X_p$, as is common in works in this field and defined by
\begin{align*}
X_p&:=\{x=(x_i)_{i\ge1}:\|x\|_p <\infty\|,\ \ \ p\ge 0,
\end{align*}
with the norm
\begin{align*}
\|x\|_p&:= \sum_{i=1}^{\infty}i^p|x_i|.
\end{align*}
Then we set
\begin{align*}
X_p^+:=\{x\in X_p, x_i\ge 0 \text{ for each } i\ge 1\},
\end{align*}
which is the positive cone of the space $X_p$.

Throughout this paper, we assume that the coagulation kernel $(\theta_{i,j})$ is non-negative, symmetric (i.e. $0 \le \theta_{i,j} =\theta_{j,i}, \forall i,j \ge 1$) and satisfying
\begin{eqnarray}
\theta_{i,j}=\omega_i \omega_j+\kappa_{i,j},\label{ker1}
\end{eqnarray}

where the sequences $(\omega_i)_{i\ge 1}$ and $(\kappa_{i,j})_{i,j \ge 1}$ contains non-negative real numbers. In addition, we assume that $(\omega_i)$ and $(\kappa_{i,j})$ satisfy

\begin{eqnarray}
\lim_{i\to \infty} \frac{\omega_i}{i}=0,~~~\text{and}~~~ \lim_{j\to \infty} \frac{\kappa_{i,j}}{j}=0,~~\forall ~i\ge 1\label{ker2}
\end{eqnarray}
or
\begin{eqnarray}
\inf_{i\ge 1}\frac{\omega_i}{i}=R>0,~~~\text{and}~~~\kappa_{i,j}\le A\omega_i \omega_j, ~~\forall ~i\ge 1. \label{ker3}
\end{eqnarray}

Let us now define the notion of solution to \eqref{drbk}-\eqref{drbkic}.

\begin{definition}\label{defsol}
Consider $T\in (0, \infty)$ and $f^{\text{in}}=(f_i^{\text{in}})_{i \ge 1} \in X_p^+$. A solution $f=(f_i){i\ge 1}$ to equations \eqref{drbk}-\eqref{drbkic} on the interval $[0,T)$ with initial data $f(0)=f^{\text{in}}$ can be defined as a function $f:[0,T] \to X_p^+$ satisfying the following conditions:
\begin{enumerate}
\item for all $i \ge 1$, the function $f_i:[0,T] \to \mathbb{R}$ is continuous and the supremum of $|f(t)|_p$ over $t\in[0,T]$ is finite,\\

\item for all $i \ge 1$ and $t\in[0,T]$, the series $\sum_{j=1}^{\infty} \theta_{i,j} f_j$ belongs to $L^1(0,t)$,\\

\item for all $i \ge 1$ and $t\in[0,T]$, the equation governing $f_i(t)$ is given by: 
$$f_i(t)=f_i^{\text{in}}+\int_0^t\bigg(\sum_{j=1}^{\infty}\theta_{i+j,j}f_{i+j} f_j-\sum_{j=1}^{\infty}\theta_{i,j}f_{i} f_j\bigg)ds.$$
\end{enumerate}
\end{definition}

Let us now state the main results of the paper, which generalize \cite[Theorem 3.1]{dacosta1} to a broader class of coagulation rates given by \eqref{ker1}-\eqref{ker3}.

\begin{theorem}\label{thm2.1}
Let us assume \eqref{ker1} holds along with either \eqref{ker2} or \eqref{ker3} for coagulation kernel $\theta_{i,j}$. For any initial data $f^{\text{in}}=(f_i^{\text{in}})_{i\ge 1} \in X_1^+$, the system \eqref{drbk}-\eqref{drbkic} has at least one solution $f$ on $[0, +\infty)$ which satisfies $f(t) \in X_1^+$ for each $t\in [0, +\infty)$, and
\begin{eqnarray}\label{mass1}
\|f(t)\|_1 \le \|f^{\text{in}}\|_1.
\end{eqnarray}
\end{theorem}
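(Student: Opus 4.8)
The plan is to construct a solution as a limit of solutions to finite-dimensional truncations, following the classical truncation--compactness--passage-to-the-limit scheme used in \cite{Ali, dacosta1, Laurencot0}. For each integer $n\ge 2$ I would introduce the truncated system obtained by discarding all clusters of size larger than $n$: for $1\le i\le n$,
\begin{equation*}
\frac{df_i^n}{dt}=\sum_{j=1}^{n-i}\theta_{i+j,j}f_{i+j}^n f_j^n-\sum_{j=1}^{n}\theta_{i,j}f_i^n f_j^n,\qquad f_i^n(0)=f_i^{\text{in}},
\end{equation*}
with $f_i^n\equiv 0$ for $i>n$. The right-hand side is a quadratic (hence locally Lipschitz) vector field on $\mathbb{R}^n$, so the Cauchy--Lipschitz theorem yields a unique maximal solution; quasi-positivity (when $f_i^n=0$ the right-hand side reduces to the non-negative gain term) guarantees $f_i^n(t)\ge 0$ for as long as the solution exists.

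The first a priori estimate is the mass balance. Substituting $k=i+j$ in the gain term, summing $i\,(df_i^n/dt)$ over $1\le i\le n$, and using the symmetry of $\theta$, I would obtain
\begin{equation*}
\frac{d}{dt}\sum_{i=1}^{n} i f_i^n = -\sum_{1\le j<k\le n}2j\,\theta_{k,j}f_k^n f_j^n-\sum_{k=1}^{n}k\,\theta_{k,k}(f_k^n)^2\le 0,
\end{equation*}
which reflects the physical fact that every RBK collision destroys an amount of mass equal to twice the size of the smaller colliding cluster. Consequently $\|f^n(t)\|_1\le\|f^{\text{in}}\|_1$ for all $t$, so no finite-time blow-up occurs and each truncated solution is global. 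The same superadditivity argument applied to any convex $\Phi$ with $\Phi(0)=0$ gives $\Phi(k-j)\le\Phi(k)-\Phi(j)$ and hence $\frac{d}{dt}\sum_i\Phi(i)f_i^n\le 0$; choosing $\Phi$ via the de la Vall\'ee--Poussin theorem so that $\sum_i\Phi(i)f_i^{\text{in}}<\infty$ and $\Phi(r)/r\to\infty$, I would secure the uniform bound $\sum_i\Phi(i)f_i^n(t)\le\sum_i\Phi(i)f_i^{\text{in}}$, i.e. equi-integrability of the family $(i f_i^n)_{n,t}$.

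For compactness I would fix $i$ and note that $(f_i^n)_n$ is bounded (by $\|f^{\text{in}}\|_1$) and, thanks to the kernel bounds \eqref{ker2}--\eqref{ker3} together with the mass bound, that $|df_i^n/dt|$ is bounded uniformly in $n$ on each $[0,T]$; the Arzel\`a--Ascoli theorem and a diagonal extraction then produce a subsequence with $f_i^n\to f_i$ uniformly on compact time intervals for every $i$. Fatou's lemma transfers \eqref{mass1} to the limit, and the equi-integrability furnished by the de la Vall\'ee--Poussin moment lets me pass to the limit inside the series and the time integral in the integral formulation of Definition \ref{defsol}, thereby identifying $f=(f_i)_{i\ge 1}$ as a solution on $[0,+\infty)$.

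I expect the decisive difficulty to lie exactly where the abstract locates it: controlling the gain term $\sum_j\theta_{i+j,j}f_{i+j}^n f_j^n$ for a \emph{fixed small} $i$. Unlike the Smoluchowski gain term, this sum draws contributions from arbitrarily large clusters of sizes $i+j$ and $j$, so its convergence and, more importantly, the uniform smallness of its tail $\sum_{j>J}$ are not automatic. For the product part $\omega_{i+j}\omega_j$ I would exploit the factorized bound $\sum_j\omega_{i+j}\omega_j f_{i+j}^n f_j^n\le\big(\sum_j\omega_{i+j}f_{i+j}^n\big)\big(\sum_j\omega_j f_j^n\big)$ together with $\omega_m=o(m)$ and the equi-integrability of $(m f_m^n)$; the genuinely delicate term is $\sum_j\kappa_{i+j,j}f_{i+j}^n f_j^n$, where both indices diverge and the pointwise decay in \eqref{ker2} does not apply directly. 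Under \eqref{ker3} this term is dominated by $A\,\omega_{i+j}\omega_j$ and is handled as above, whereas under \eqref{ker2} the estimate must be extracted more carefully from the sublinearity of $\omega$ and $\kappa$ combined with the moment bounds --- this is the step I would develop using the preliminary estimates of Section 3.
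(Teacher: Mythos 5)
Your overall scheme (truncation, a priori bounds, compactness, passage to the limit) is the same as the paper's, your mass-dissipation identity with $\psi_i=i$ is correct, and the de la Vall\'ee--Poussin/superadditivity observation $\Phi(i-j)\le\Phi(i)-\Phi(j)$ is a valid way to propagate a superlinear moment. The gap lies in the case \eqref{ker3}, where $\omega_i\ge Ri$ with \emph{no upper bound} (e.g.\ $\omega_i=i^2$ is admissible). There, both of your key steps fail as stated. First, the compactness step: you claim $|df_i^n/dt|$ is bounded uniformly in $n$ on $[0,T]$, but the natural bound is $|df_i^n/dt|\le 2(1+A)\bigl(\sum_j\omega_jf_j^n\bigr)^2$, and $\sum_j\omega_jf_j^n$ is \emph{not} controlled by the mass (it is essentially a higher moment), nor by your $\Phi$-moment, since $\Phi$ is chosen from $f^{\text{in}}$ independently of $\omega$. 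So Arzel\`a--Ascoli does not apply. Second, the tail of the gain term: your factorized bound needs $\sup_{m>J}\omega_m/m\to0$, which is exactly the hypothesis \eqref{ker2} and is false under \eqref{ker3}; equi-integrability of $(if_i^n)$ does not rescue $\sum_{j>J}\omega_{i+j}\omega_jf_{i+j}^nf_j^n$ when $\omega$ is superlinear.

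The missing ingredient is the dissipation estimate the paper extracts from the weak formulation. Testing with $\psi_i=1$ gives $\int_0^t\bigl|\sum_i\omega_if_i^n\bigr|^2ds\le2\sum_if_i^{\text{in}}$, which bounds $df_i^n/dt$ in $L^1(0,T)$ uniformly in $n$; this yields a uniform $BV$ bound and compactness via Helly's selection theorem (pointwise convergence suffices), not Arzel\`a--Ascoli. Testing with $\psi_i=i^{1/2}$ gives the quantitative tail bound $\int_{t_1}^{t_2}\bigl(\sum_{j\ge r}\omega_jf_j^n\bigr)^2ds\le 2r^{-1/2}\sum_ii^{1/2}f_i^n(t_1)$, whose $r^{-1/2}$ decay (uniform in $n$) is what lets one pass to the limit in both the gain and loss series under \eqref{ker3}. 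These two time-integrated $L^2$ estimates, which exploit the quadratic dissipation of the equation rather than propagation of moments, are the engine of the proof and are absent from your proposal; without them the argument only covers the sublinear case \eqref{ker2} (where, as you rightly note, the term $\kappa_{i+j,j}$ with both indices large still needs a separate argument --- the paper handles it by reducing to the bound $\kappa_{i,j}\le C_1(ij)^{\alpha}$ and invoking the existence result of da Costa et al.).
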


\begin{theorem}\label{thm2.2}
Let us assume that \eqref{ker1} and \eqref{ker3} hold for coagulation kernel $\theta_{i,j}$ and initial data satisfies $f^{\text{in}}=(f_i^{\text{in}})_{i\ge 1} \in X_0^+$. Then the system \eqref{drbk}-\eqref{drbkic} has at least one solution $f$ on $[0, +\infty)$ which satisfies $f(t) \in X_0^+$ for each $t\in [0, +\infty)$, and
\begin{eqnarray}\label{mass2}
\|f(t)\|_1 <\infty, \ \ \ \ \ t>0.
\end{eqnarray}
\end{theorem}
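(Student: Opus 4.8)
The plan is to realize the solution as a limit of solutions with truncated initial data, exploiting the observation that the lower bound $\omega_i \ge Ri$ built into \eqref{ker3} forces a \emph{quadratic} dissipation of the mass which regularizes an $X_0^+$ datum into $X_1^+$ instantaneously. First I would approximate $f^{\mathrm{in}} \in X_0^+$ by its truncations $f^{\mathrm{in},n} := (f_1^{\mathrm{in}},\dots,f_n^{\mathrm{in}},0,0,\dots) \in X_1^+$ and invoke Theorem \ref{thm2.1} (applicable since \eqref{ker1} and \eqref{ker3} hold) to produce global solutions $f^n$ with $f^n(t)\in X_1^+$ and $\|f^n(t)\|_1 \le \|f^{\mathrm{in},n}\|_1$. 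Since an RBK collision never increases cluster size, a datum supported in $\{1,\dots,n\}$ keeps its support in $\{1,\dots,n\}$ for all time, so each $f^n$ in fact solves a finite-dimensional ODE system and all moment manipulations below are exact. Testing with $\phi_i=1$ shows the zeroth moment is non-increasing, whence $\|f^n(t)\|_0 \le \|f^{\mathrm{in},n}\|_0 \le \|f^{\mathrm{in}}\|_0$ uniformly in $n$ and $t$.

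The heart of the argument is a uniform, datum-independent decay estimate for the mass. Testing the equation with $\phi_i=i$ and reindexing the gain term via $k=i+j$ gives the exact identity $\frac{d}{dt}\|f^n(t)\|_1 = -\sum_{i,j}\min(i,j)\,\theta_{i,j} f^n_i f^n_j$, since $|i-j|-(i+j)=-2\min(i,j)$. Using $\theta_{i,j}\ge \omega_i\omega_j \ge R^2 ij$ and $\min(i,j)\ge 1$ then yields the differential inequality $\frac{d}{dt}\|f^n(t)\|_1 \le -R^2\|f^n(t)\|_1^2$. Integrating the inequality for $(\|f^n(t)\|_1)^{-1}$ produces $\|f^n(t)\|_1 \le (R^2 t)^{-1}$ for every $t>0$, a bound independent of $n$ and, crucially, of the possibly infinite initial mass $\|f^{\mathrm{in},n}\|_1$. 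This is exactly the mechanism behind \eqref{mass2}.

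With the uniform bounds $\|f^n(t)\|_0 \le \|f^{\mathrm{in}}\|_0$ and $\|f^n(t)\|_1 \le (R^2 t)^{-1}$ in hand, I would pass to the limit. On every strip $[\tau,T]$ with $0<\tau<T$ the equation bounds $|df^n_i/dt|$ uniformly, so $t\mapsto f^n_i(t)$ is equicontinuous; an Arzel\`a--Ascoli argument together with a diagonal extraction over $i$ gives $f^n_i(t)\to f_i(t)$ for each $i$, locally uniformly on $(0,\infty)$. Passing to the limit in the mild formulation of Definition \ref{defsol} and carrying the bound $\|f(t)\|_1\le (R^2 t)^{-1}$ through the limit yields $\|f(t)\|_1<\infty$ for $t>0$, while continuity into $X_0^+$ at $t=0$ (so that $f(0)=f^{\mathrm{in}}$) follows from the zeroth-moment bound and the convergence of the finitely many leading components.

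The main obstacle is the passage to the limit in the gain term $\sum_{j}\theta_{i+j,j}f^n_{i+j}f^n_j$, which is precisely the ``large particles create small particles'' difficulty emphasized in the abstract: a fixed small index $i$ receives contributions from collisions of two large, nearly equal clusters $i+j$ and $j$, and $\theta_{i+j,j}\le(1+A)\omega_{i+j}\omega_j$ grows with their size. Splitting the series at $j\le N$ and $j>N$, the finite part converges by pointwise convergence, but the tail $\sum_{j>N}\theta_{i+j,j}f^n_{i+j}f^n_j$ must be shown to be small uniformly in $n$. I expect to control it through the uniform moment estimates combined with a de la Vall\'ee--Poussin/uniform-integrability argument, guaranteeing that no mass escapes into arbitrarily large clusters in the limit; this tail control, rather than the dissipation inequality itself, is the delicate step of the proof.
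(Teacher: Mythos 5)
Your overall architecture (finite-dimensional truncation, uniform a priori bounds, compactness, limit passage) matches the paper's, and your mass-decay mechanism is a genuinely different and attractive route: the paper does not use the Riccati inequality $\frac{d}{dt}\|f^n\|_1 \le -R^2\|f^n\|_1^2$. Instead it integrates the identity for the zeroth moment to get $\int_0^t\big|\sum_i \omega_i f_i^n\big|^2\,ds \le 2\|f^{\mathrm{in}}\|_0$, combines this with $\omega_i \ge Ri$ and the monotonicity of $t\mapsto\sum_i i f_i^n(t)$ to obtain $\|f^n(t)\|_1 \le \tfrac{2}{R}\|f^{\mathrm{in}}\|_0^{1/2}t^{-1/2}$ (Lemma \ref{l4}). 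Your bound $(R^2t)^{-1}$ is independent of the datum altogether and your computation $|i-j|-(i+j)=-2\min(i,j)$ is correct; either estimate yields \eqref{mass2}.

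There are, however, two concrete gaps. First, the compactness step: you claim that on $[\tau,T]$ the equation bounds $|df_i^n/dt|$ uniformly, hence equicontinuity and Arzel\`a--Ascoli. But $|df_i^n/dt| \le 2(1+A)\big|\sum_j\omega_j f_j^n\big|^2$, and under \eqref{ker3} there is no upper bound on $\omega_j$ (it may grow superlinearly), so $\sum_j\omega_j f_j^n(s)$ is not controlled pointwise in time by the zeroth or first moments, even for $s\ge\tau>0$. What the available estimates give is only $\int_0^T|df_i^n/dt|\,ds \le 4(1+A)\|f^{\mathrm{in}}\|_0$, i.e.\ a uniform $W^{1,1}(0,T)$ (hence $BV$) bound; an $L^1$ bound on the derivative does not yield equicontinuity. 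The paper therefore uses Helly's selection theorem rather than Arzel\`a--Ascoli, and this is the fix you need; it also gives pointwise convergence down to $t=0$, which your strip construction on $(0,\infty)$ does not, and which you need to recover $f(0)=f^{\mathrm{in}}$ in Definition \ref{defsol}.

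Second, the tail of the gain term. Uniform integrability / de la Vall\'ee--Poussin applied to the zeroth and first moments cannot close this step, for the same reason as above: the tail $\sum_{j>N}\theta_{i+j,j}f^n_{i+j}f^n_j$ is bounded by $(1+A)\big(\sum_{j>N}\omega_jf_j^n\big)^2$ up to harmless factors, and $\sum_{j>N}\omega_jf_j^n$ is not dominated by $\sum_{j>N}jf_j^n$ when $\omega_j$ is superlinear. The estimate that actually does the job is the weighted $L^2$-in-time bound with quantitative tail decay, $\int_{t_1}^{t_2}\big(\sum_{j\ge r}\omega_jf_j^n\big)^2ds \le 2r^{-1/2}\sum_i i^{1/2}f_i^n(t_1)$, obtained by testing with $\psi_i=i^{1/2}$ (the paper's \eqref{eqL2}); in the $X_0$ setting one applies it with $t_1=\tau>0$, where the half-moment is finite by interpolating your bounds $\|f^n(\tau)\|_0\le\|f^{\mathrm{in}}\|_0$ and $\|f^n(\tau)\|_1\le (R^2\tau)^{-1}$ via Cauchy--Schwarz. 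Without this $r^{-1/2}$ gain your limit passage does not close.
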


\section{Preliminaries}

It is well known that in the RBK coagulation model, the only process is coagulation and the resulting cluster is smaller in size, no clusters larger than $n$ can be produced. For fixed $n$. This condition can be mathematically represented by the following truncated system of $n$ ordinary differential equations. First, we fix $n\ge 2$ and let us consider an initial condition $f_j(0) = 0$ for $j > n$, where $n$ is a positive integer.

\begin{eqnarray}
\frac{df^n_i}{dt}&=&\sum_{j=1}^{n-i}\theta^n_{i+j,j}f^n_{i+j} f^n_j-\sum_{j=1}^{n}\theta^n_{i,j}f^n_{i} f^n_j, \ \ \ \ \ \ i\in \{1,...,n\} \label{tdrbk}\\
f^n_i(0)&=&f_i^{n,\text{in}}\ge 0\label{tdrbkic}.
\end{eqnarray}

\begin{lemma}\label{l1}
For each $n\ge 2$, there exists a unique non-negative solution $(f^n_i)_{1\le i \le n} $
to the system \eqref{tdrbk}-\eqref{tdrbkic} in the space $C^1([0,+\infty), \mathbb{R}^n)$. Additionally, for $t\in[0,+\infty)$, we have
\begin{eqnarray*}
\sum_{i=1}^{n} if^n_i(t)\le \sum_{i=1}^{n} if_i^{n,\text{in}}.
\end{eqnarray*}
\end{lemma}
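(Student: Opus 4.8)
The plan is to read \eqref{tdrbk}--\eqref{tdrbkic} as an initial value problem in $\mathbb{R}^n$ and proceed in three stages: local well-posedness, invariance of the positive cone, and an a priori first-moment estimate that both yields the stated inequality and forces the solution to be global. First I would observe that the right-hand side of \eqref{tdrbk}, viewed as a map $F=(F_i)_{1\le i\le n}\colon \mathbb{R}^n\to\mathbb{R}^n$, is a quadratic polynomial in the coordinates $(f^n_i)_{1\le i\le n}$ (the truncated coagulation coefficients $\theta^n_{i,j}$ being fixed constants), hence $C^\infty$ and in particular locally Lipschitz. The Picard--Lindel\"of theorem then provides a unique maximal solution $(f^n_i)_{1\le i\le n}\in C^1([0,T_{\max}),\mathbb{R}^n)$ with $T_{\max}\in(0,+\infty]$.

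Next, for non-negativity I would verify that $F$ is quasi-positive on the closed orthant: if $f\ge 0$ componentwise and $f_i=0$ for some index $i$, then the death term $f_i\sum_{j=1}^n\theta^n_{i,j}f_j$ vanishes while the birth term $\sum_{j=1}^{n-i}\theta^n_{i+j,j}f_{i+j}f_j$ is a sum of non-negative quantities, so $F_i(f)\ge 0$. By the standard invariance criterion for positive ODE systems this makes $\mathbb{R}^n_{\ge 0}$ positively invariant, whence $f^n_i(t)\ge 0$ for all $i$ and all $t\in[0,T_{\max})$ since $f_i^{n,\text{in}}\ge 0$. If one prefers a hands-on argument, the same conclusion follows by rewriting the $i$-th equation as $\dot f^n_i + f^n_i\,h_i(t)=g_i(t)$ with $h_i(t)=\sum_j\theta^n_{i,j}f^n_j(t)$ continuous, solving via the integrating factor $\exp(\int_0^t h_i)$, and running a continuity/induction argument to keep the birth terms $g_i$ non-negative.

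The heart of the proof is the first-moment identity. Differentiating $\sum_{i=1}^n i f^n_i$ along the solution and substituting \eqref{tdrbk}, I would reindex the birth contribution by setting $k=i+j$, turning it into $\sum_{1\le j<k\le n}(k-j)\,\theta^n_{k,j}f^n_k f^n_j$, and symmetrize the death contribution using $\theta^n_{i,j}=\theta^n_{j,i}$ to write it as $\tfrac12\sum_{i,j}(i+j)\theta^n_{i,j}f^n_i f^n_j$. Combining the two and using that, off the diagonal, $|i-j|-(i+j)=-2\min(i,j)$ (while the death diagonal $\sum_i i\,\theta^n_{i,i}(f^n_i)^2$ accounts for $i=j$), everything collapses to
\begin{eqnarray*}
\frac{d}{dt}\sum_{i=1}^n i\,f^n_i(t)=-\sum_{i,j=1}^n \min(i,j)\,\theta^n_{i,j}\,f^n_i(t)f^n_j(t)\le 0,
\end{eqnarray*}
where the non-negativity established above guarantees the sign. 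Integrating in $t$ yields the claimed inequality $\sum_{i=1}^n i f^n_i(t)\le\sum_{i=1}^n i f_i^{n,\text{in}}$.

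Finally, this bound closes the argument for global existence: since $f^n_i(t)\ge 0$ and $f^n_i(t)\le\sum_{k}k f^n_k(t)\le M:=\sum_k k f_k^{n,\text{in}}<\infty$, the solution stays in a fixed bounded box on $[0,T_{\max})$, so the standard continuation criterion rules out finite-time blow-up and forces $T_{\max}=+\infty$. The one step that genuinely requires care is the first-moment computation: it is precisely here that the RBK annihilation mechanism surfaces, the surviving factor $\min(i,j)$ encoding the mass lost when two clusters collide and the larger is only partially consumed. For this truncated, finite-dimensional system all sums are finite, so no interchange-of-summation or convergence issue arises; the same bookkeeping, however, is what will have to be controlled far more delicately when passing to the limit $n\to\infty$ in the proof of Theorem~\ref{thm2.1}.
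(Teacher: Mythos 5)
Your proof is correct, and it is essentially the route the paper has in mind, except that the paper does not actually write the argument out: it states the lemma, remarks that the right-hand side of \eqref{tdrbk} is polynomial so Picard--Lindel\"of gives local existence and uniqueness, and delegates non-negativity and global extension to \cite[Proposition 2.3]{dacosta1}; the moment inequality is then re-derived in Lemma \ref{l3}(i) by taking $\psi_i=i$ in the weak formulation \eqref{wdrbk1}, which yields $-2\int\sum_{i}\sum_{j=1}^{i-1}j\,\theta_{i,j}f_i^nf_j^n\,ds\le 0$. You supply all of the omitted pieces yourself: the quasi-positivity check for invariance of the orthant, the continuation criterion closed by the first-moment bound, and a direct reindexing/symmetrization of $\frac{d}{dt}\sum_i i f_i^n$ in place of the weak-form identity. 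Your resulting expression $-\sum_{i,j}\min(i,j)\theta_{i,j}f_i^nf_j^n$ is the symmetrized form of the paper's and, unlike \eqref{wdrbk1} as printed, correctly accounts for the diagonal collisions $i=j$ (which contribute $-\sum_i i\,\theta_{i,i}(f_i^n)^2$); since that term is also non-positive the conclusion is unaffected, but your bookkeeping is the more careful of the two. The only trade-off is self-containedness versus brevity: the paper's appeal to the weak formulation \eqref{wdrbk} lets it reuse one identity for several choices of $\psi$ later on, whereas your computation is tailored to $\psi_i=i$.
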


The given system \eqref{tdrbk}-\eqref{tdrbkic} represents a finite-dimensional system, with the right-hand side being a polynomial function of the solution vector $f^n=(f^n_i(\cdot))$. By employing the Picard-Lindel\"of existence theorem, we can directly establish the existence of local solutions to the associated Cauchy problems. The following lemma provides a concise summary of the significant findings concerning solutions to this finite-dimensional system.

\begin{lemma}\label{l2}
Let $f^n=(f_i^n(.)): I_{\text{max}} \to \mathbb{R}^n_+$ be the unique local solution of \eqref{tdrbk}-\eqref{tdrbkic} with $I_{\text{max}}$ as its maximal interval. Then for $0 \le t_1 \le t_2 \le +\infty$
\begin{enumerate}
\item[($i$)] For every sequence of non-negative real numbers $(\psi_i)_{1\le i \le n}$ and $1\le m \le n$ the following holds true
\begin{eqnarray}\label{wdrbk}
\sum_{i=m}^{n}\psi_i f_i^n(t_2)-\sum_{i=m}^{n}\psi_i f_i^n(t_1)&=&\int_{t_1}^{t_2}\bigg[-\sum_{\mathcal{T}_1(m,n)}(\psi_i-\psi_{i-j})\theta_{i,j}f^n_{i}(s) f^n_j(s)\nonumber\\
& & \ \ \ \ \ \  -\sum_{\mathcal{T}_2(m,n)}\psi_i \theta_{i,j}f^n_{i}(s) f^n_j(s)\bigg]ds,
\end{eqnarray}
where
\begin{eqnarray*}
\mathcal{T}_1(m,n)&=& \{(i,j)\in \{m,...,n\}\times\{1,...,n\};j\le i-m\},\\
\mathcal{T}_2(m,n)&=& \{(i,j)\in \{m,...,n\}\times\{1,...,n\};j\ge i-m+1\}.
\end{eqnarray*}
\item[($ii$)] for initial condition given in \eqref{tdrbkic}. if all components $f_i^{n,\text{in}}$ are non-negative then all components of solution $f^n=(f^n_i)_{1\le i \le n}$ are also non-negative.
\item[($iii$)] $\sup I_{\text{max}} =+\infty$.
\end{enumerate}

\end{lemma}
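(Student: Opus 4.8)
The plan is to treat the three assertions in the order (i), (ii), (iii), since the a priori bound required for (iii) is precisely \eqref{wdrbk} specialised to $\psi_i=i$ and $m=1$, which in turn needs the non-negativity from (ii).

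\emph{Part (i).} This is a purely algebraic identity for any $C^1$ solution on $I_{\max}$, so no positivity enters. First I would differentiate $t\mapsto\sum_{i=m}^{n}\psi_i f_i^n(t)$ and insert \eqref{tdrbk}, producing a gain contribution $\sum_{i=m}^{n}\sum_{j=1}^{n-i}\psi_i \theta_{i+j,j} f_{i+j}^n f_j^n$ and a loss contribution $\sum_{i=m}^{n}\sum_{j=1}^{n}\psi_i \theta_{i,j} f_i^n f_j^n$. The key manipulation is the index shift $k=i+j$ in the gain term: the newly created cluster has size $i=k-j$ when a $k$-cluster meets a $j$-cluster, and the constraints $i\ge m$, $1\le j\le n-i$ transform into $(k,j)$ ranging over $\mathcal{T}_1(m,n)$ with weight $\psi_{k-j}$, so relabelling $k$ as $i$ rewrites the gain as $\sum_{\mathcal{T}_1(m,n)}\psi_{i-j}\theta_{i,j}f_i^n f_j^n$. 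Splitting the loss sum over $j$ according to whether $j\le i-m$ or $j\ge i-m+1$ (i.e.\ over $\mathcal{T}_1(m,n)$ and $\mathcal{T}_2(m,n)$) and subtracting, the two $\mathcal{T}_1$ pieces combine into $-(\psi_i-\psi_{i-j})$; integrating from $t_1$ to $t_2$ then gives \eqref{wdrbk}.

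\emph{Part (ii).} I would prove invariance of the positive cone by a quasipositivity argument. Writing \eqref{tdrbk} as $\dot f_i^n = P_i(f^n) - f_i^n\, d_i(f^n)$ with $P_i(f^n)=\sum_{j=1}^{n-i}\theta_{i+j,j}f_{i+j}^n f_j^n$ and $d_i(f^n)=\sum_{j=1}^{n}\theta_{i,j}f_j^n$, one sees that whenever all components are non-negative and $f_i^n=0$, the loss term vanishes while $P_i(f^n)\ge 0$, so the field points into $\mathbb{R}^n_+$ along its boundary. Concretely I would set $T^\ast=\sup\{t\in I_{\max}: f_i^n(s)\ge 0 \text{ for all } i \text{ and } s\in[0,t]\}$, suppose $T^\ast<\sup I_{\max}$, and reach a contradiction: continuity gives $f^n(T^\ast)\in\mathbb{R}^n_+$, and for any index $i_0$ with $f_{i_0}^n(T^\ast)=0$ we get $\dot f_{i_0}^n(T^\ast)=P_{i_0}(f^n(T^\ast))\ge 0$, which together with local boundedness of the $d_i$ prevents that component from turning negative just after $T^\ast$. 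This is the standard invariance argument of \cite{dacosta1, Laurencot0}.

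\emph{Part (iii).} With non-negativity available, I would apply \eqref{wdrbk} with $m=1$ and $\psi_i=i$. Then $\psi_i-\psi_{i-j}=j\ge 0$ on $\mathcal{T}_1(1,n)$ and $\psi_i=i\ge 0$ on $\mathcal{T}_2(1,n)$, so both integrands on the right are non-negative and hence $\sum_{i=1}^{n} i f_i^n(t_2)\le \sum_{i=1}^{n} i f_i^n(t_1)$; in particular $\sum_{i=1}^{n} i f_i^n(t)\le \sum_{i=1}^n i f_i^{n,\text{in}}=:C$ for every $t\in I_{\max}$. Since $0\le f_i^n(t)\le \sum_{k=1}^{n} k f_k^n(t)\le C$, the trajectory stays in the compact set $[0,C]^n$; as the right-hand side of \eqref{tdrbk} is polynomial, hence locally Lipschitz, a solution confined to a fixed compact set cannot blow up in finite time, and the standard continuation criterion forces $\sup I_{\max}=+\infty$. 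The only genuinely delicate step is Part (ii): the gain products $f_{i+j}^n f_j^n$ are not sign-definite a priori, so non-negativity cannot simply be read off from a Duhamel/integrating-factor representation and must be obtained from the boundary behaviour of the vector field; the remainder is the index bookkeeping of (i) and a routine bound-plus-continuation argument in (iii).
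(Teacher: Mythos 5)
Your proposal is correct, and it supplies in full the argument that the paper itself does not write out: the paper disposes of this lemma with a single citation to \cite[Proposition 2.3]{dacosta1}, and your three steps are precisely the standard proof found there. The index bookkeeping in (i) checks out: under $k=i+j$ the constraints $i\ge m$, $1\le j\le n-i$ become $j\le k-m$, $k\le n$, $j\ge 1$, which is exactly $\mathcal{T}_1(m,n)$ (pairs with $k=m$ contribute nothing since $j\le 0$ is vacuous), and splitting the loss sum over $\mathcal{T}_1\cup\mathcal{T}_2$ gives \eqref{wdrbk}. Your ordering (i)$\to$(ii)$\to$(iii) is also the right logical dependence, since (i) is a purely algebraic identity on $I_{\max}$ while (iii) needs both (i) with $\psi_i=i$, $m=1$ and the sign information from (ii); the confinement of the trajectory to $[0,\sum_i i f_i^{n,\text{in}}]^n$ plus the polynomial (hence locally Lipschitz) right-hand side and the standard continuation criterion then yields $\sup I_{\max}=+\infty$.

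The one place where you should tighten the write-up is the invariance argument in (ii). As you yourself flag, quasipositivity of the vector field ($\dot f_{i_0}^n(T^\ast)=P_{i_0}(f^n(T^\ast))\ge 0$ when $f_{i_0}^n(T^\ast)=0$ and all other components are non-negative) does not by itself exclude the component dipping negative immediately after $T^\ast$ when that derivative vanishes. The standard rigorous completions are either (a) to perturb the system by adding $\varepsilon>0$ to each right-hand side, observe that the perturbed solution is strictly positive up to the first hypothetical crossing time, and let $\varepsilon\to 0$ using continuous dependence, or (b) to run a Gronwall estimate on $\sum_i \max\{-f_i^n,0\}$ on a short interval past $T^\ast$, using local boundedness of the coefficients. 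Either patch is routine and is exactly what \cite{dacosta1} does; with that sentence added, your proof is complete.
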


The proof of Lemma \ref{l2} follows from \cite[Proposition 2.3]{dacosta1}.
Put $m=1$ in \eqref{wdrbk}, we obtain
\begin{eqnarray*}
\mathcal{T}_1(1,n)&=& \{(i,j)\in \{1,...,n\}\times\{1,...,n\};j\le i-1\},\\
\mathcal{T}_2(1,n)&=& \{(i,j)\in \{1,...,n\}\times\{1,...,n\};j\ge i\},
\end{eqnarray*}
and
\begin{eqnarray}\label{wdrbk1}
\sum_{i=1}^{n}\psi_i f_i^n(t_2)-\sum_{i=1}^{n}\psi_i f_i^n(t_1)=\int_{t_1}^{t_2}\bigg[\sum_{i=1}^{n}\sum_{j=1}^{i-1}(\psi_{i-j}-\psi_i-\psi_j)\theta_{i,j}f^n_{i}(s) f^n_j(s)\bigg]ds.
\end{eqnarray}

As a result of \eqref{wdrbk}-\eqref{wdrbk1}, we develop various estimates in the following sequence of lemmas, which are required to prove Theorem \ref{thm2.1} and Theorem \ref{thm2.2}. The proof of following lemma is motivated by \cite[Lemma 3.2]{Laurencot0}.

\begin{lemma}\label{l3}
Suppose that \eqref{ker1} holds and $f^n=(f^n_i)_{1\le i \le n}$ be a solution of \eqref{tdrbk}-\eqref{tdrbkic}. Then the following holds:
\begin{enumerate}
\item[(i)] For $0 \le t_1 \le t_2 < +\infty$,
\begin{eqnarray}
\sum_{i=1}^{n} if^n_i(t_2) \le \sum_{i=1}^{n} if^n_i(t_1)\le \sum_{i=1}^{n} if_i^{\text{in}},\ \ \ \ 0 \le t_1 \le t_2 < +\infty. \label{eqmass2}
\end{eqnarray}
\item[(ii)] For $t\in[0,+\infty)$,
\begin{eqnarray}
\sum_{i=1}^{n} f_i^n(t)+\frac{1}{2}\int_{0}^{t}\bigg|\sum_{i=1}^{n} \omega_i f_i^n(s)\bigg|^2 ds\le \sum_{i=1}^{n} f_i^{\text{in}}. \label{eqnum}
\end{eqnarray}
\item[(iii)] If $1 \le r \le n$ and $0 \le t_1 \le t_2 < +\infty$, then
\begin{eqnarray}
\int_{t_1}^{t_2}\bigg(\sum_{i=r}^{n} \omega_i f_i^n(s)\bigg)^2 ds \le 2r^{-1/2}\sum_{i=1}^{n} i^{1/2} f_i^n(t_1),\ \ \ 0 \le t_1 \le t_2 < +\infty. \label{eqL2}
\end{eqnarray}

\end{enumerate}
\end{lemma}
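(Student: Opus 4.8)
The plan is to obtain all three estimates from the weak formulation \eqref{wdrbk} (taken with $m=1$), tested against suitably chosen non-negative sequences $(\psi_i)_{1\le i\le n}$, together with the lower bound $\theta_{i,j}\ge\omega_i\omega_j$ supplied by \eqref{ker1}. In differential form, symmetrizing the loss term by means of $\theta_{i,j}=\theta_{j,i}$, this reads
\begin{equation*}
\frac{d}{dt}\sum_{i=1}^n\psi_i f_i^n=\sum_{i=1}^n\sum_{j=1}^{i-1}(\psi_{i-j}-\psi_i-\psi_j)\theta_{i,j}f_i^n f_j^n-\sum_{i=1}^n\psi_i\theta_{i,i}(f_i^n)^2,
\end{equation*}
where I deliberately keep the diagonal ($j=i$) self-collision term, since it is precisely what upgrades the crude bilinear lower bounds into full squares below.

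For (i) I would take $\psi_i=i$, so that $\psi_{i-j}-\psi_i-\psi_j=-2j\le0$ for $j<i$ while the diagonal term is $-i\,\theta_{i,i}(f_i^n)^2\le0$; hence the right-hand side is non-positive, $t\mapsto\sum_{i=1}^n i f_i^n(t)$ is non-increasing, and integrating on $[t_1,t_2]$ and then specializing $t_1=0$ yields \eqref{eqmass2}. For (ii) I would take $\psi_i=1$, so the coefficient becomes $-1$ and the identity reads $\frac{d}{dt}\sum_i f_i^n=-\sum_{j<i}\theta_{i,j}f_i^n f_j^n-\sum_i\theta_{i,i}(f_i^n)^2$. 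Using $\theta_{i,j}\ge\omega_i\omega_j$ and the elementary identity $2\sum_{j<i}\omega_i\omega_j f_i^n f_j^n+\sum_i(\omega_i f_i^n)^2=\big(\sum_i\omega_i f_i^n\big)^2$, the loss term is seen to dominate $\frac12\big(\sum_i\omega_i f_i^n\big)^2$, and integrating from $0$ to $t$ gives \eqref{eqnum}.

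The delicate estimate is (iii), for which I would test with $\psi_i=i^{1/2}$. The key elementary inequality is that for $r\le j<i$ one has $i^{1/2}+j^{1/2}-(i-j)^{1/2}\ge j^{1/2}\ge r^{1/2}$ (because $i^{1/2}\ge(i-j)^{1/2}$), while on the diagonal $i^{1/2}\ge r^{1/2}$ whenever $i\ge r$. Discarding the non-negative contributions of the indices $j<r$ and again invoking $\theta_{i,j}\ge\omega_i\omega_j$, the integrated loss term is bounded below by $r^{1/2}\big[\sum_{r\le j<i}\omega_i\omega_j f_i^n f_j^n+\sum_{i\ge r}(\omega_i f_i^n)^2\big]$, which by completion of the square equals $\frac{r^{1/2}}{2}\big(\sum_{i\ge r}\omega_i f_i^n\big)^2+\frac{r^{1/2}}{2}\sum_{i\ge r}(\omega_i f_i^n)^2\ge\frac{r^{1/2}}{2}\big(\sum_{i\ge r}\omega_i f_i^n\big)^2$. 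Integrating the resulting differential inequality over $[t_1,t_2]$ and dropping the non-negative term $\sum_i i^{1/2}f_i^n(t_2)$ delivers \eqref{eqL2}. I expect the main obstacle to be exactly this bookkeeping: one must retain the self-interaction terms, which are what complete the squares, and must truncate the double sum to indices $\ge r$ so that the weight $i^{1/2}+j^{1/2}-(i-j)^{1/2}$ admits the uniform lower bound $r^{1/2}$; everything else is routine once the signed weak identity is in hand.
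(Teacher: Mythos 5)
Your proof is correct and follows essentially the same route as the paper: test the weak moment identity with $\psi_i=i$, $\psi_i=1$ and $\psi_i=i^{1/2}$, invoke $\theta_{i,j}\ge\omega_i\omega_j$ from \eqref{ker1}, and complete the square, with your restriction of the double sum to indices $\ge r$ in part (iii) playing exactly the role of the paper's bound $\min\{i,j\}^{1/2}\ge r^{1/2}$. The only difference is that you carry the diagonal self-collision term $-\sum_i\psi_i\theta_{i,i}(f_i^n)^2$ explicitly, whereas the paper's identity \eqref{wdrbk1} omits it and then tacitly reinserts it through the symmetrization $\sum_{j<i}=\tfrac12\sum_{i,j}$; your bookkeeping is the cleaner of the two and reaches the same estimates.
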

\begin{proof}
\begin{enumerate}
\item[(i)] First, let $\psi_i=i$ in \eqref{wdrbk1}, we conclude
\begin{eqnarray*}
\sum_{i=1}^{n} i f_i^n(t_2)-\sum_{i=1}^{n} i f_i^n(t_1)= -2 \int_{t_1}^{t_2}\sum_{i=1}^{n}\sum_{j=1}^{i-1} j \theta_{i,j}f_i^n(s)f_j^n(s) ds \le 0.
\end{eqnarray*}
which implies \eqref{eqmass2}.
\item[(ii)] Next, take $\psi_i=1$, $t_2=t$ and $t_1=0$ in \eqref{wdrbk1}, we obtain
\begin{eqnarray*}
\sum_{i=1}^{n} f_i^n(t)-\sum_{i=1}^{n} f_i^n(0)= - \int_{0}^{t}\sum_{i=1}^{n}\sum_{j=1}^{i-1} \theta_{i,j}f_i^n(s)f_j^n(s) ds.
\end{eqnarray*}
Now, by using change of order of summation, the following equality holds
\begin{eqnarray*}
\int_{0}^{t}\sum_{i=1}^{n}\sum_{j=1}^{i-1} \theta_{i,j}f_i^n(s)f_j^n(s) ds &=& \frac{1}{2} \int_{0}^{t}\sum_{i=1}^{n}\sum_{j=1}^{i-1} \theta_{i,j}f_i^n(s)f_j^n(s) ds +\frac{1}{2} \int_{0}^{t}\sum_{i=1}^{n}\sum_{j=i}^{n} \theta_{i,j} f_i^n(s)f_j^n(s) ds\\
&=& \frac{1}{2} \int_{0}^{t}\sum_{i=1}^{n}\sum_{j=1}^{n} \theta_{i,j}f_i^n(s)f_j^n(s) ds.
\end{eqnarray*}
Combining preceding two equalities, we obtain the following equality
\begin{eqnarray*}
\sum_{i=1}^{n} f_i^n(t) + \frac{1}{2} \int_{0}^{t}\sum_{i=1}^{n}\sum_{j=1}^{n} \theta_{i,j}f_i^n(s)f_j^n(s) ds = \sum_{i=1}^{n} f_i^n(0),
\end{eqnarray*}
which then along with the inequality $\omega_i \omega_j \le \theta_{i,j}$ gives \eqref{eqnum}.

\item[(iii)] Now, in \eqref{wdrbk1}, put $\psi_i=i^\frac{1}{2}$ and by using the inequality  $(i-j)^{1/2}\le i^{1/2} \implies (i-j)^{1/2}- i^{1/2}\le 0$, we obtain

\begin{eqnarray}
\sum_{i=1}^{n}i^{1/2}f_i^n(t_2)-\sum_{i=1}^{n}i^{1/2}f_i^n(t_1)\le -\int_{t_1}^{t_2} \sum_{i=1}^{n}\sum_{j=1}^{i-1} j^{1/2}\theta_{i,j}f_i^n(s)f_j^n(s) ds.\label{*}
\end{eqnarray}

Next, Changing the order of summation on the right-hand side of \eqref{*}, we obtain
\begin{eqnarray*}
\sum_{i=1}^{n}\sum_{j=1}^{i-1} j^{1/2}\theta_{i,j}f_i^n(s)f_j^n(s)&=&\frac{1}{2} \sum_{i=1}^{n}\sum_{j=1}^{i-1} j^{1/2}\theta_{i,j}f_i^n(s)f_j^n(s)+\frac{1}{2}\sum_{j=1}^{n}\sum_{i=j}^{n} i^{1/2}\theta_{i,j}f_i^n(s)f_j^n(s)\\
&=&\frac{1}{2} \sum_{i=1}^{n}\sum_{j=1}^{n} \min\{i,j\}^{1/2}\theta_{i,j}f_i^n(s)f_j^n(s).
\end{eqnarray*}

Therefore, by using above equality in \eqref{*}, we get
\begin{eqnarray*}
\sum_{i=1}^{n}i^{1/2}f_i^n(t_2)+\frac{1}{2}\int_{t_1}^{t_2} \sum_{i=1}^{n}\sum_{j=1}^{n} \min\{i,j\}^{1/2}\theta_{i,j}f_i^n(s)f_j^n(s) ds \le \sum_{i=1}^{n}i^{1/2}f_i^n(t_1),
\end{eqnarray*}
which gives
\begin{eqnarray}
& &\sum_{i=1}^{n}i^{1/2}f_i^n(t_2)\le \sum_{i=1}^{n}i^{1/2}f_i^n(t_1),\nonumber\\
& & \text{and}\nonumber\\
& &\frac{1}{2}\int_{t_1}^{t_2} \sum_{i=1}^{n}\sum_{j=1}^{n} \min\{i,j\}^{1/2}\theta_{i,j}f_i^n(s)f_j^n(s) ds \le \sum_{i=1}^{n}i^{1/2}f_i^n(t_1).\label{**}
\end{eqnarray}
Next, by using \eqref{**} and $\omega_i \omega_j \le \theta_{i,j}$, we obtain, for $1 \le r_1 \le r_2 \le n$,
\begin{eqnarray*}
r_1^{1/2}\int_{t_1}^{t_2}\sum_{i=r_1}^{r_2}\sum_{j=r_1}^{r_2} \omega_i\omega_j f_i^n(s)f_j^n(s) ds &\le & \int_{t_1}^{t_2}\sum_{i=r_1}^{r_2}\sum_{j=r_1}^{r_2} \min\{i,j\}^{1/2} \omega_i\omega_j f_i^n(s)f_j^n(s) ds \\
&\le &\int_{t_1}^{t_2} \sum_{i=1}^{n}\sum_{j=1}^{n} \min\{i,j\}^{1/2}\theta_{i,j}f_i^n(s)f_j^n(s) ds\\
&\le & 2\sum_{i=1}^{n}i^{1/2}f_i^n(t_1),
\end{eqnarray*}
which implies
\begin{eqnarray}
\int_{t_1}^{t_2}\bigg(\sum_{i=r_1}^{r_2} \omega_i f_i^n(s)\bigg)^2 ds \le 2r_1^{-1/2}\sum_{i=1}^{n}i^{1/2}f_i^n(t_1).\label{3*}
\end{eqnarray}

\end{enumerate}
 Hence Lemma \ref{l3} is proved.
\end{proof}

Another implication of \eqref{wdrbk} is the following lemma, which applies to the kernel satisfying \eqref{ker1} along with \eqref{ker3}. 
\begin{lemma}\label{l4}
Under the assumption of \eqref{ker1} and \eqref{ker3} for the coagulation kernel $\theta_{i,j}$, the following statement holds for every $t\in [0, +\infty)$:
\begin{eqnarray}\label{lt2}
\sum_{i=1}^{n} i f_i^n(t)\le \frac{2}{R} \bigg( \sum_{i=1}^{n} f_i^{\text{in}}\bigg)^{1/2} t^{-1/2}.
\end{eqnarray}
\end{lemma}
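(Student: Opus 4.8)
The plan is to combine the $L^2$-in-time control of the weighted number density provided by Lemma~\ref{l3}(ii) with the monotonicity of the mass from Lemma~\ref{l3}(i), the bridge between them being the lower bound on $(\omega_i)$ supplied by \eqref{ker3}. First I would record the elementary but crucial consequence of \eqref{ker3}: since $\inf_{i\ge1}\omega_i/i=R>0$, we have $\omega_i\ge R i$ for every $i\ge1$. Because all the $f_i^n$ are non-negative (Lemma~\ref{l2}(ii)), this yields, for every $s\in[0,+\infty)$,
\begin{eqnarray*}
\sum_{i=1}^n \omega_i f_i^n(s)\ge R\sum_{i=1}^n i\, f_i^n(s).
\end{eqnarray*}

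Next I would invoke the number-balance estimate \eqref{eqnum} with $t_1=0$ and $t_2=t$, and simply discard the non-negative term $\sum_{i=1}^n f_i^n(t)$ on its left-hand side, leaving
\begin{eqnarray*}
\frac{1}{2}\int_0^t\bigg(\sum_{i=1}^n \omega_i f_i^n(s)\bigg)^2\,ds \le \sum_{i=1}^n f_i^{\text{in}}.
\end{eqnarray*}
Inserting the pointwise lower bound above and writing $M(s):=\sum_{i=1}^n i\,f_i^n(s)$ for brevity, this becomes $\tfrac{R^2}{2}\int_0^t M(s)^2\,ds\le \sum_{i=1}^n f_i^{\text{in}}$.

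The final, and conceptually central, step is to upgrade this \emph{time-integrated} bound to a \emph{pointwise} decay estimate for $M(t)$. Here I would use Lemma~\ref{l3}(i), which says the mass is non-increasing in time; hence $M(s)\ge M(t)$ for all $s\in[0,t]$, and therefore
\begin{eqnarray*}
\int_0^t M(s)^2\,ds \ge \int_0^t M(t)^2\,ds = t\,M(t)^2.
\end{eqnarray*}
Combining the last two displays gives $\tfrac{R^2}{2}\,t\,M(t)^2\le \sum_{i=1}^n f_i^{\text{in}}$, so that
\begin{eqnarray*}
\sum_{i=1}^n i\,f_i^n(t)=M(t)\le \frac{\sqrt2}{R}\bigg(\sum_{i=1}^n f_i^{\text{in}}\bigg)^{1/2}t^{-1/2}\le \frac{2}{R}\bigg(\sum_{i=1}^n f_i^{\text{in}}\bigg)^{1/2}t^{-1/2},
\end{eqnarray*}
which is precisely \eqref{lt2} (note that the proof in fact delivers the slightly sharper constant $\sqrt2$ in place of $2$).

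I expect the only genuinely non-routine move to be this last replacement $\int_0^t M(s)^2\,ds\ge t\,M(t)^2$: it is exactly the point where the monotonicity of the mass is indispensable, converting the quadratic absorption term of \eqref{eqnum} into an algebraic self-improving bound. Everything else is a direct substitution of \eqref{ker3} into \eqref{eqnum}, and notably Lemma~\ref{l3}(iii) is not needed for this particular estimate. One should only take care that all bounds are uniform in the truncation parameter $n$, which they are, since the right-hand side $\sum_{i=1}^n f_i^{\text{in}}\le \|f^{\text{in}}\|_0$ is controlled independently of $n$.
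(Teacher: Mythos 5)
Your proof is correct and follows essentially the same route as the paper: bound $\sum_i i f_i^n$ by $R^{-1}\sum_i \omega_i f_i^n$ via \eqref{ker3}, apply the $L^2$-in-time estimate \eqref{eqnum}, and use the monotonicity of the mass from \eqref{eqmass2} to convert the time-integrated bound into the pointwise decay $t\,M(t)^2\le \tfrac{2}{R^2}\sum_i f_i^{\text{in}}$. Like the paper's argument, yours actually yields the sharper constant $\sqrt2/R$ in place of $2/R$.
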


\begin{proof}
First, by using \eqref{ker3} and \eqref{eqnum}, we conclude that
\begin{eqnarray*}
\int_0^t \bigg| \sum_{i=1}^{n} i f_i^n(s) \bigg|^2 ds & \le & \int_0^t \frac{1}{R^2}\bigg| \sum_{i=1}^{n} \omega_i f_i^n(s) \bigg|^2 ds\\
& \le & \frac{2}{R^2} \int_0^t \sum_{i=1}^{n} f_i^{\text{in}}(s) ds.
\end{eqnarray*}
Since $s\to \sum_{i=1}^{n} i f_i^n(s)$ is non-increasing by \eqref{eqmass2}, so above inequality implies
$$ t\bigg|\sum_{i=1}^n if_i^n (t)\bigg|^2 \le \frac{2}{R^2}\sum_{i=1}^{n} f_i^{ \text{in}}.$$
This completes the proof of Lemma \ref{l4}.
\end{proof}

\section{Proof of Theorem \ref{thm2.1}}
We now proceed to prove Theorem \ref{thm2.1}. First, we need to demonstrate that the sequence of solutions to finite dimensional truncated system \eqref{tdrbk}-\eqref{tdrbkic} is compact. To prove this, we will employ an alternative method compared to the one used in \cite{dacosta1}, which relies on Helly's selection theorem. Therefore, one can establish the required compactness using the above mentioned theorems. For the detailed proof of Theorem \ref{thm2.1}, from \eqref{ker1}-\eqref{ker2}, we can easily say that for all $i,j \ge 1$
$$\kappa_{i,j} \le C_1(ij)^{\alpha},~~~~\text{for some}~\alpha \in [0,1),$$ 
where $C_1$ is a constant. The kernels satisfying above condition also satisfy $$\kappa_{i,j} \le C_1(ij).$$
It follows a similar approach as in \cite[Theorem 3.1]{dacosta1}, so the proof of existence of solution under the assumptions \eqref{ker1}-\eqref{ker2} will be omitted here.

Now, we turn to the case where the coagulation rate $\theta_{i,j}$ satisfies \eqref{ker3}. It follows from \eqref{ker3} that, for each $i\ge 1$ and $n\ge i$, we have
\begin{eqnarray}\label{eqsimp}
\sum_{j=r_1}^{r_2} \theta_{i,j} f_j^n \le (1+A)\omega_i \sum_{j=r_1}^{r_2} \omega_j f_j^n, \ \ \ \ \ 1\le r_1 \le r_2 \le n.
\end{eqnarray}

In order to use the Helly's selection theorem, it is necessary to demonstrate that the sequence $(f_i^n)_{n>i}$ is uniformly bounded in $W^{1,1}(0,T)$, for all fixed $T\in (0,\infty)$. This can be shown by using the following lemma:

\begin{lemma}\label{l3.1}
For all $i\geq 1$, and for every $T\in (0,+\infty)$, there exists a constant $\Pi$ that depends on $f^{\text{in}}=(f_i^{\text{in}})_{i \ge 1}$ such that
\begin{eqnarray}\label{normW12}
 \|f_i^n\|_{L^1(0,T)} +\bigg\|\dfrac{df_i^n}{dt}\bigg\|_{L^1(0,T)}\le \Pi,\ \ \ \ t\in [0,T].
\end{eqnarray}
\end{lemma}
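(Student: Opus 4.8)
The plan is to control the two summands in \eqref{normW12} separately, fixing $i \ge 1$ and $T \in (0,+\infty)$ throughout and abbreviating $M := \|f^{\text{in}}\|_1$ and $W^n(s) := \sum_{m=1}^{n}\omega_m f_m^n(s)$. The first term is elementary: since $f_i^n \ge 0$ and $i f_i^n(t) \le \sum_{k=1}^{n} k f_k^n(t) \le M$ by the mass estimate \eqref{eqmass2}, I get $f_i^n(t) \le M/i$ pointwise, whence $\|f_i^n\|_{L^1(0,T)} \le TM/i$. All the real work lies in bounding $\|df_i^n/dt\|_{L^1(0,T)}$ by a constant independent of $n$, since it is exactly this $n$-uniformity that will feed Helly's selection theorem.

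To that end I would start from the truncated equation \eqref{tdrbk} and estimate the derivative by the triangle inequality,
\begin{eqnarray*}
\left|\frac{df_i^n}{dt}\right| \le \sum_{j=1}^{n-i}\theta_{i+j,j}f_{i+j}^n f_j^n + \sum_{j=1}^{n}\theta_{i,j}f_i^n f_j^n,
\end{eqnarray*}
and then show that both sums are dominated by $(1+A)(W^n)^2$, whose time integral is exactly what \eqref{eqnum} controls. The death (second) sum is the easier one: applying \eqref{eqsimp} with $r_1=1$, $r_2=n$ gives $\sum_{j=1}^{n}\theta_{i,j}f_j^n \le (1+A)\omega_i W^n$, and since every term is non-negative we also have $\omega_i f_i^n \le W^n$, so the death sum is at most $(1+A)(W^n)^2$.

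The hard part, and the genuinely model-specific step, is the birth (first) sum, which records the creation of the \emph{small} cluster $i$ out of collisions between the \emph{large} clusters $i+j$ and $j$. Here I would invoke $\theta_{i+j,j} \le (1+A)\omega_{i+j}\omega_j$ from \eqref{ker3}, reindex by $k = i+j$, and use the elementary inequality $\sum_k a_k b_k \le \big(\sum_k a_k\big)\big(\sum_l b_l\big)$, valid for non-negative sequences (it is just the diagonal of a product of two sums), with $a_k = \omega_k f_k^n$ and $b_k = \omega_{k-i}f_{k-i}^n$. This collapses the birth sum into
\begin{eqnarray*}
\sum_{j=1}^{n-i}\theta_{i+j,j}f_{i+j}^n f_j^n \le (1+A)\Big(\sum_{k=i+1}^{n}\omega_k f_k^n\Big)\Big(\sum_{l=1}^{n-i}\omega_l f_l^n\Big) \le (1+A)(W^n)^2,
\end{eqnarray*}
so the large-to-small production is controlled by the \emph{same} functional $W^n$ as the death term. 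I expect this decoupling to be the delicate point, since the naive estimate on the birth term mixes the indices $k$ and $k-i$ and does not obviously close unless one factorises through $W^n$.

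Finally I would integrate and appeal to \eqref{eqnum}, which yields $\int_0^T (W^n(s))^2\,ds \le 2\sum_{i=1}^{n} f_i^{\text{in}} \le 2M$. Combining the two sums gives $\|df_i^n/dt\|_{L^1(0,T)} \le 4(1+A)M$, and adding the first-term estimate produces \eqref{normW12} with $\Pi := TM + 4(1+A)M$, a constant independent of both $i$ and $n$. The essential insight is that, thanks to \eqref{eqnum}, both the death term and the troublesome large-particle birth term are dominated by the single $L^2$-in-time bounded quantity $W^n$.
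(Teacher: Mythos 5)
Your proof is correct and follows essentially the same route as the paper: both arguments bound $\big|df_i^n/dt\big|$ by $2(1+A)\big(\sum_{j}\omega_j f_j^n\big)^2$ using \eqref{ker3} (via \eqref{eqsimp} and the non-negative product inequality for the birth sum) and then integrate in time with \eqref{eqnum} to obtain the $n$-uniform bound $4(1+A)\|f^{\text{in}}\|_0$. The only (harmless) deviation is in the trivial first term, where you use the mass bound \eqref{eqmass2} to get $f_i^n\le \|f^{\text{in}}\|_1/i$ while the paper uses \eqref{eqnum} to get $f_i^n\le \|f^{\text{in}}\|_0$; the latter is slightly preferable because the lemma is reused in the proof of Theorem \ref{thm2.2}, where the data is only in $X_0^+$ and $\Pi$ must not depend on $\|f^{\text{in}}\|_1$.
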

\begin{proof}
 We infer from \eqref{eqnum} that, for $T\in (0, +\infty)$, 
$$\sum_{i=1}^n f_i^n(t) \le \sum_{i=1}^n f_i^{\text{in}},$$ 
which gives
\begin{eqnarray}
f_i^n(t) \le \sum_{j=1}^{n}f_j^n(t) \le \sum_{j=1}^{n}f_j^{\text{in}} \le \sum_{j=1}^{\infty}f_j^{\text{in}}\le \|f^{\text{in}}\|_0, \ \ \ \forall \ t\in[0,T]. \label{normb1}
\end{eqnarray} 

Next, by using \eqref{tdrbk} and \eqref{eqsimp}, we obtain

\begin{eqnarray*}
\bigg|\dfrac{df_i^n}{dt}\bigg| &=& \bigg|\sum_{j=1}^{n-i}\theta_{i+j,j}f^n_{i+j} f^n_j-\sum_{j=1}^{n}\theta_{i,j}f^n_{i} f^n_j \bigg|\\
&\le& (1+A)\bigg|\sum_{j=1}^{n-i}\omega_{i+j} \omega_{j}f^n_{i+j} f^n_j-\sum_{j=1}^{n}\omega_{i} \omega_{j}f^n_{i} f^n_j \bigg|\\
&\le & (1+A)\bigg|\sum_{j=1}^{n-i} \omega_{i+j} f^n_{i+j}\bigg|\bigg|\sum_{j=1}^{n-i}\omega_{j} f^n_j\bigg|+(1+A) \bigg|\sum_{j=1}^{n}\omega_{j} f^n_j \bigg|^2\\
&\le &2(1+A) \bigg|\sum_{j=1}^{n} \omega_{j} f^n_j(s)\bigg|^2.
\end{eqnarray*}

Finally, by using \eqref{eqnum}, we conclude that for each $T\in (0, +\infty)$ and $i \ge 1$
\begin{eqnarray*}
\bigg\|\dfrac{df_i^n}{dt}\bigg\|_{L^1(0,T)} &=& \int_0^T \bigg|\dfrac{df_i^n}{dt}\bigg| ds\\
 &\le & 2(1+A) \int_0^T\bigg|\sum_{j=1}^{n} \omega_{j} f^n_j(s)\bigg|^2ds\\
 & \le & 4(1+A) \sum_{i=1}^{n} f_i^{\text{in}}.
\end{eqnarray*}
The above inequality along with \eqref{normb1} gives \eqref{normW12}. This completes the proof of Lemma \ref{l3.1}.
\end{proof}

As a consequence, for every $T \in (0, +\infty)$, the sequence $(f_i^n)_{n\ge i}$ is bounded in $W^{1,1}(0,T)$. Due to the fact that $W^{1,1}(0,T)$ is a closed subspace of $BV(0,T)$, Helly's selection theorem concludes that there exists a subsequence $(f_i^{n_k})$ of $(f_i^n)_{n \ge i}$ converging pointwise to a $BV$ function $(f_i)$ in $([0,T])$ for each $i \ge 1$ and $T \in (0, +\infty)$ i.e.

\begin{eqnarray}\label{conv1}
f_i^{n_k}(t) \to f_i(t) \ \ \ \mbox{as} \ k \to \infty, \ \forall t\in [0,T], \forall i\ge 1.
\end{eqnarray}

Now, we check that $f(t) \in X_1^+$ for all $t \in (0,+\infty)$. For $n_k \ge r$, \eqref{eqmass2} implies that if $t \in (0, +\infty)$ and $r \ge 1$, then
$$\sum_{i=1}^{r} i f_i^{n_k}(t) \le \|f^{\text{in}}\|_1.$$
Now, we use \eqref{conv1}, to pass the limit $k \to +\infty$ in preceding inequality to get
$$\sum_{i=1}^{r} i f_i(t) \le \|f^{\text{in}}\|_1.$$
Given that $r \ge 1$ is arbitrarily chosen, the preceding inequality implies that
$$\sum_{i=1}^{\infty} i f_i(t) \le \|f^{\text{in}}\|_1,$$
which guarantees that $f(t) \in X_1^+$ and \eqref{mass1} holds.

We will now demonstrate the convergence of the truncated system \eqref{tdrbk}-\eqref{tdrbkic} to the RBK coagulation system \eqref{drbk}-\eqref{drbkic}. For this, the following lemma is essentially required:

\begin{lemma}\label{Convl2}
Assume that \eqref{ker1} and \eqref{ker3} hold and $(f_i^{n_k})$ and $(f_i)$ are as in \eqref{conv1}. Then, for each $i \ge 1$ and $T \in (0, +\infty)$, we have
\begin{align}\label{convL2}
 \sum_{j=1}^{n_k} \theta_{i,j} f_{j}^{n_k} \to \sum_{j=1}^{\infty} \theta_{i,j} f_{j} \ \ \text{in} \ L^1(0,T),
\end{align}
as $k\to \infty $.
\end{lemma}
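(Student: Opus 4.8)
The plan is to prove convergence in $L^1(0,T)$ by the standard three-term splitting at a truncation level $N$, balancing a finite ``head'' against two ``tail'' contributions. Fix $i\ge 1$ and $T\in(0,+\infty)$, and for $N\ge 1$ and $n_k\ge N$ write
\begin{align*}
\bigg\|\sum_{j=1}^{n_k}\theta_{i,j}f_j^{n_k}-\sum_{j=1}^{\infty}\theta_{i,j}f_j\bigg\|_{L^1(0,T)} &\le \bigg\|\sum_{j=1}^{N}\theta_{i,j}\big(f_j^{n_k}-f_j\big)\bigg\|_{L^1(0,T)} \\
&\quad +\bigg\|\sum_{j=N+1}^{n_k}\theta_{i,j}f_j^{n_k}\bigg\|_{L^1(0,T)}+\bigg\|\sum_{j=N+1}^{\infty}\theta_{i,j}f_j\bigg\|_{L^1(0,T)}.
\end{align*}
The goal is to render the two tail terms small uniformly in $k$ by choosing $N$ large, and then to send the head term to zero by letting $k\to\infty$ with $N$ fixed; an $\varepsilon/3$ argument then closes the proof.

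For the head term I would argue by dominated convergence. It is a finite sum of $N$ terms; by \eqref{conv1} each $f_j^{n_k}(t)\to f_j(t)$ pointwise on $[0,T]$, while \eqref{normb1} supplies the uniform bound $f_j^{n_k}(t)\le\|f^{\text{in}}\|_0$ (hence also $f_j(t)\le\|f^{\text{in}}\|_0$), so $\theta_{i,j}\,|f_j^{n_k}-f_j|$ is dominated by a constant, integrable function on $(0,T)$. Thus for each fixed $N$ the head term tends to $0$ as $k\to\infty$.

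The crux of the argument, and the step I expect to be the main obstacle, is the uniform smallness of the tails; this is exactly where the structural assumption \eqref{ker3} and the $L^2$-in-time estimate \eqref{eqL2} must be combined. For the first tail I would use \eqref{eqsimp} to peel off the fixed factor $\omega_i$, then apply the Cauchy--Schwarz inequality in time to convert the resulting $L^1$ norm into an $L^2$ norm, and finally invoke \eqref{eqL2} (with $t_1=0$, $t_2=T$, $r=N+1$) together with $i^{1/2}\le i$ to obtain
$$\int_0^T\sum_{j=N+1}^{n_k}\theta_{i,j}f_j^{n_k}\,ds \le (1+A)\,\omega_i\,T^{1/2}\Big(2(N+1)^{-1/2}\|f^{\text{in}}\|_1\Big)^{1/2},$$
a bound that is independent of $k$ and tends to $0$ as $N\to\infty$. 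The delicate point is that this decay is uniform in the truncation index $n_k$ only because the right-hand side of \eqref{eqL2} is controlled by the initial data through $\sum_i i^{1/2} f_i^{\text{in}}\le\|f^{\text{in}}\|_1<\infty$, which is finite precisely because $f^{\text{in}}\in X_1^+$.

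For the second tail I would transfer this same estimate to the limit by lower semicontinuity. For finite $M>N$ the partial sum $\sum_{j=N+1}^{M}\theta_{i,j}f_j$ is the pointwise limit of $\sum_{j=N+1}^{M}\theta_{i,j}f_j^{n_k}$, so Fatou's lemma bounds $\int_0^T\sum_{j=N+1}^{M}\theta_{i,j}f_j\,ds$ by the same $k$-uniform constant, and monotone convergence in $M$ yields the identical bound for $\sum_{j=N+1}^{\infty}\theta_{i,j}f_j$; this argument simultaneously certifies that the limit series lies in $L^1(0,T)$. Combining the three pieces through the $\varepsilon/3$ scheme delivers \eqref{convL2}.
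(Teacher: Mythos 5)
Your proof is correct and follows essentially the same route as the paper's: the identical head/tail decomposition at a finite truncation level, the same combination of \eqref{eqsimp} with the $L^2$-in-time bound \eqref{eqL2} via the Cauchy--Schwarz inequality to make both tails small uniformly in $k$, and pointwise convergence plus boundedness for the finite head. The only cosmetic difference is that the paper first passes to the limit in the $L^2$ estimate (obtaining \eqref{eqf2L2}, which it also uses to verify condition (2) of Definition \ref{defsol}) before converting to $L^1$, whereas you transfer the bound to the limit tail directly in $L^1$ via Fatou's lemma and monotone convergence; the substance is the same.
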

\begin{proof}
It can easily be followed from \eqref{eqmass2}, \eqref{eqL2} and \eqref{eqsimp} that for $i \ge 1$, $T\in (0, +\infty)$ and $1\le r_1 \le r_2 \le n_k$
\begin{align}\label{eqfL2}
\int_0^T \bigg|\sum_{j=r_1}^{r_2} \theta_{i,j} f_{j}^{n_k}(s)\bigg|^2 ds \le 2(1+A)^2 \omega_i^2 r_1^{-1/2} \|f^{\text{in}}\|_1.
\end{align}
 We now use \eqref{conv1} to pass the limit as $k \to +\infty$ in \eqref{eqfL2} to obtain
\begin{align}\label{eqf1L2}
\int_0^T \bigg|\sum_{j=r_1}^{r_2} \theta_{i,j} f_{j}(s)\bigg|^2 ds \le 2(1+A)^2 \omega_i^2 r_1^{-1/2} \|f^{\text{in}}\|_1 \ \ \ 1\le r_1 \le r_2 < +\infty.
\end{align}
Since the right-hand side of \eqref{eqf1L2} does not depend on $r_2$ and $r_2$ is arbitrary, so for each $i \ge 1$ and $T \in (0, +\infty)$
\begin{align}\label{eqf2L2}
\int_0^T \bigg|\sum_{j=r}^{\infty} \theta_{i,j} f_{j}(s)\bigg|^2 ds \le 2(1+A)^2 \omega_i^2 r^{-1/2} \|f^{\text{in}}\|_1, \ \ \ \   r \ge 1 ,
\end{align}
which implies $\sum_{j=r}^{\infty} \theta_{i,j} f_{j}(s) \in L^2(0,T)$ for each $r \ge 1$.
Since, from Cauchy–Schwarz inequality, we conclude
\begin{align*}
	\int_0^T \bigg|\sum_{j=r}^{\infty} \theta_{i,j} f_{j}(s)\bigg| ds &\le \bigg[\int_0^T \bigg|\sum_{j=r}^{\infty} \theta_{i,j} f_{j}(s)\bigg|^2ds\bigg]^{1/2} \bigg[ \int_0^T 1^2 ds\bigg]^{1/2} \\
	&\le \sqrt{2T}(1+A) \omega_i r^{-1/4} \|f^{\text{in}}\|_1^{1/2}, \ \ \ \ \  r \ge 1.
\end{align*}
This implies $\sum_{j=r}^{\infty} \theta_{i,j} f_{j}(s) \in L^1(0,T)$ for each $r \ge 1$ and so condition (2) of Definition \ref{defsol} is fulfilled.

Next, for each $n_k \ge r \ge i$, from \eqref{eqfL2} and \eqref{eqf2L2}, we obtain

\begin{align*}
\bigg\| \sum_{j=1}^{n_k} \theta_{i,j} f_j^{n_k}- \sum_{j=1}^{\infty} \theta_{i,j} f_j \bigg\|_{L^1(0,T)}  \le & \bigg\| \sum_{j=1}^{r-1} \theta_{i,j} (f_j^{n_k}- f_j) \bigg\|_{L^1(0,T)} \\
& +\bigg\| \sum_{j=r}^{n_k} \theta_{i,j} f_j^{n_k} \bigg\|_{L^1(0,T)} +\bigg\| \sum_{j=r}^{\infty} \theta_{i,j} f_j \bigg\|_{L^1(0,T)} \\
\le & \sum_{j=1}^{r-1} \theta_{i,j} \|f_j^{n_k}- f_j\|_{L^1(0,T)}+ 2\sqrt{2T}(1+A) \omega_i \|f^{\text{in}}\|_1^{1/2} r^{-1/4}.
\end{align*}

Because $\theta_{i,j}$ is bounded in the interval $[0, r)$, \eqref{conv1} implies that
\begin{align*}
\limsup_{k \to \infty} \bigg\| \sum_{j=1}^{n_k} \theta_{i,j} f_j^{n_k}- \sum_{j=1}^{\infty} \theta_{i,j} f_j \bigg\|_{L^1(0,T)} \le 2\sqrt{2T}(1+A) \omega_i \|f^{\text{in}}\|_1^{1/2} r^{-1/4}.
\end{align*}

Above equation is true for all $r \ge 1$ and the left-hand side is independent of $r$. Therefore, by passing the limit $r \to +\infty$, we complete the proof of Lemma \ref{Convl2}.
\end{proof}

It is easy to conclude from \eqref{3*} that for $T\in (0, +\infty)$ and $1\le r_1 \le r_2 \le n_k$
\begin{align*}
	\int_0^T \bigg|\sum_{j=r_1}^{r_2} \omega_{j} f_{j}^{n_k}(s)\bigg|^2 ds \le 2 r_1^{-1/2} \|f^{\text{in}}\|_1.
\end{align*}
We now use \eqref{conv1} to pass the limit as $k \to +\infty$ to obtain
\begin{align*}
	\int_0^T \bigg|\sum_{j=r_1}^{r_2} \omega_{j} f_{j}(s)\bigg|^2 ds \le 2 r_1^{-1/2} \|f^{\text{in}}\|_1 \ \ \ 1\le r_1 \le r_2 < +\infty.
\end{align*}
Since, $r_2$ is arbitrary and the right-hand side of \eqref{eqf1L2} does not depend on $r_2$, so for each $T \in (0, +\infty)$
\begin{align}\label{eqf2L2*}
	\int_0^T \bigg|\sum_{j=r}^{\infty} \omega_{j} f_{j}(s)\bigg|^2 ds \le 2 r^{-1/2} \|f^{\text{in}}\|_1, \ \ \ \   r \ge 1 ,
\end{align}
which implies $\sum_{j=r}^{\infty} \omega_{j} f_{j}(s) \in L^2(0,T)$ for each $r \ge 1$.

Now, we show how each summation on the right-hand side of \eqref{tdrbk}-\eqref{tdrbkic}  converges to the corresponding summation on the right-hand side of \eqref{drbk}-\eqref{drbkic}.  In order to do this, we must demonstrate the following results:
\begin{align}\label{1term}
\sum_{j=1}^{n_k-i} \theta_{i+j,j} f_{i+j}^{n_k} f_j^{n_k} \xrightarrow[k \to \infty]{} \sum_{j=1}^{\infty} \theta_{i+j,j} f_{i+j} f_j \ \ \ \ \mbox{in} \ L^1(0,T),
\end{align}
and
\begin{align}\label{2term}
\sum_{j=1}^{n_k} \theta_{i,j} f_{i}^{n_k} f_j^{n_k} \xrightarrow[k \to \infty]{} \sum_{j=1}^{\infty} \theta_{i,j} f_{i} f_j\ \ \ \ \mbox{in} \ L^1(0,T).
\end{align}

Let us first prove the estimate \eqref{1term}. For each $T\in (0, +\infty)$ and $n_k \ge r > i$, by using \eqref{ker1}, \eqref{ker3} and \eqref{eqf2L2*}, we obtain
\begin{align*}
& \bigg\|\sum_{j=1}^{n_k-i} \theta_{i+j,j} f_{i+j}^{n_k} f_j^{n_k}- \sum_{j=1}^{\infty} \theta_{i+j,j} f_{i+j} f_j \bigg\|_{L^1(0,T)}\\
\le & \bigg\|\sum_{j=1}^{r-i} \theta_{i+j,j} f_{i+j}^{n_k} f_j^{n_k}- \sum_{j=1}^{r-i} \theta_{i+j,j} f_{i+j} f_j \bigg\|_{L^1(0,T)}\\
& +\bigg\|\sum_{j=r-i+1}^{n_k-i} \theta_{i+j,j} f_{i+j}^{n_k} f_j^{n_k} \bigg\|_{L^1(0,T)} +\bigg\|\sum_{j=r-i+1}^{\infty} \theta_{i+j,j} f_{i+j} f_j \bigg\|_{L^1(0,T)} \\
\le & \int_0^T\sum_{j=1}^{r-i} \theta_{i+j,j} |f_{i+j}^{n_k} f_j^{n_k}- f_{i+j} f_j|ds\\
& +(1+A)\int_0^T\bigg|\sum_{j=r-i+1}^{n_k-i} \omega_{i+j} \omega_{j} f_{i+j}^{n_k} f_j^{n_k} \bigg|ds +(1+A)\int_0^T\bigg|\sum_{j=r-i+1}^{\infty} \omega_{i+j} \omega_{j} f_{i+j} f_j \bigg|ds \\
\le & \int_0^T\sum_{j=1}^{r-i} \theta_{i+j,j} |f_{i+j}^{n_k} f_j^{n_k}- f_{i+j} f_j|ds\\
& +(1+A)\int_0^T\bigg|\sum_{j=r-i+1}^{n_k} \omega_{j} f_j^{n_k} \bigg|^2ds +(1+A)\int_0^T\bigg|\sum_{r-i+1}^{\infty} \omega_{j} f_j \bigg|^2ds\\
\le & \int_0^T\sum_{j=1}^{r-i} \theta_{i+j,j} |f_{i+j}^{n_k} f_j^{n_k}- f_{i+j} f_j|ds+ 4 (r-i+1)^{-1/2} \|f^{\text{in}}\|_1.
\end{align*}
Since, $\theta_{i+j,j}$ is bounded for $ 1\le j \le r-i$, we use \eqref{conv1} to pass the limit $k \to \infty$ in the previous inequality and get
$$\limsup_{k \to \infty} \bigg\|\sum_{j=1}^{n_k-i} \theta_{i+j,j} f_{i+j}^{n_k} f_j^{n_k}- \sum_{j=1}^{\infty} \theta_{i+j,j} f_{i+j} f_j \bigg\|_{L^1(0,T)} \le 4 (r-i+1)^{-1/2} \|f^{\text{in}}\|_1.$$ 
Above equation is true for all $r > i$ and the left-hand side is independent of $r$. Therefore, by passing the limit $r \to +\infty$, we can easily conclude \eqref{1term}.

As pointed out above, the proof of \eqref{2term} is entirely analogous and will be omitted.
Owing to \eqref{1term} and \eqref{2term}, it is straightforward to pass to the limit in the $i$-th equation of \eqref{tdrbk} as $k\to +\infty$ after replacing $n$ by $n_k$.

\begin{corollary}\label{ext5}
	The solution obtained above can be extended to $t\in [0,\infty)$ as an admissible solution.
\end{corollary}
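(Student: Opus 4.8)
The plan is to upgrade the solution just constructed on an arbitrary fixed interval $[0,T]$ to one defined on the whole half-line by means of a Cantor diagonal extraction, exploiting the fact that every a priori estimate entering the construction is uniform in the truncation parameter $n$. First I would fix the increasing sequence $T_m=m$, $m\ge 1$, so that $T_m\uparrow+\infty$, and record that the mass bound \eqref{eqmass2}, the number bound \eqref{eqnum}, the $L^2$ estimate \eqref{eqL2}, and the $W^{1,1}(0,T_m)$ bound \eqref{normW12} of Lemma \ref{l3.1} all hold on each $[0,T_m]$ with constants depending only on $f^{\text{in}}$ (and, at worst linearly, on $T_m$) and independent of $n$. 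In particular, the total-variation part $\|df_i^n/dt\|_{L^1(0,T_m)}\le 4(1+A)\sum_{i=1}^{n}f_i^{\text{in}}$ is bounded uniformly in both $n$ and $m$, which is exactly what Helly's selection theorem requires on every $[0,T_m]$.

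Next I would iterate the extraction of the preceding section over the intervals $[0,T_m]$. Applied on $[0,T_1]$, the argument leading to \eqref{conv1} furnishes a subsequence $(n_k^{(1)})$ of $(n)$ along which $f_i^{n_k^{(1)}}\to f_i$ pointwise on $[0,T_1]$ for every $i\ge 1$; applied on $[0,T_2]$ to the sequence $(n_k^{(1)})$ it furnishes a further subsequence $(n_k^{(2)})\subset(n_k^{(1)})$ with the analogous property on $[0,T_2]$, and so on inductively. The Cantor diagonal sequence $(n_k):=(n_k^{(k)})$ is then a single subsequence along which $f_i^{n_k}(t)\to f_i(t)$ for every $t\in[0,+\infty)$ and every $i\ge 1$; because the extractions are nested, the limit $f=(f_i)$ is well defined and consistent across all the intervals, so that no appeal to a uniqueness statement (which is not available here) is needed to patch the pieces together.

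It then remains to verify that this global $f$ is an admissible solution in the sense of Definition \ref{defsol} on each $[0,T_m]$, which is precisely the content of the preceding section carried out with $T=T_m$: the bound $f(t)\in X_1^+$ together with \eqref{mass1} follows by passing to the limit in $\sum_{i=1}^{r} i f_i^{n_k}(t)\le\|f^{\text{in}}\|_1$ and then letting $r\to+\infty$; condition (2) of Definition \ref{defsol} follows from Lemma \ref{Convl2}; and condition (3) follows by passing to the limit in the integrated $i$-th equation of \eqref{tdrbk} via the convergences \eqref{1term} and \eqref{2term}. Since the integral identity in condition (3) evaluated at a time $t$ involves $f$ only on $[0,t]$, and $f$ solves \eqref{drbk}--\eqref{drbkic} on $[0,T_m]$ for every $m$, the identity holds for all $t\in[0,+\infty)$; hence $f$ is an admissible solution on the entire half-line.

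The main difficulty is organisational rather than analytic: one must secure a single subsequence that serves all the intervals $[0,T_m]$ at once (handled by the diagonal extraction) and guarantee that the limits obtained on the successive intervals agree on their overlaps (automatic from the nesting). Since every quantitative estimate in the construction is uniform in $n$ and controllable in $T$, no new inequality is required, and the extension reduces to this routine diagonalisation.
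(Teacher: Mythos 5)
Your proposal is correct. Note, however, that the paper itself gives no argument for Corollary \ref{ext5}: it defers entirely to \cite[Corollary 3.2]{dacosta1}, so there is no in-text proof to match against. Your diagonal extraction over $T_m=m$ is a standard, fully self-contained way to fill this in, and you correctly isolate the one point that genuinely requires care: in the absence of a uniqueness theorem, separate compactness extractions on $[0,T_1]$ and $[0,T_2]$ need not produce compatible limits, so a single nested subsequence must be manufactured. You also rightly observe that the variation bound $\bigl\|df_i^n/dt\bigr\|_{L^1(0,T)}\le 4(1+A)\|f^{\text{in}}\|_0$ from Lemma \ref{l3.1} is uniform in $T$ as well as in $n$ (only $\|f_i^n\|_{L^1(0,T)}$ grows with $T$, which is irrelevant for Helly's theorem, which needs the pointwise bound \eqref{normb1} and the variation bound); this uniformity would even permit a one-shot application of Helly's theorem on $[0,+\infty)$ in place of the diagonal procedure. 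The alternative route, closer in spirit to the cited reference, is continuation: since \eqref{mass1} gives $\|f(T)\|_1\le\|f^{\text{in}}\|_1$ (respectively $\|f(T)\|_0\le\|f^{\text{in}}\|_0$ in the setting of Theorem \ref{thm2.2}), one restarts the construction at time $T$ with initial datum $f(T)$ and concatenates. That version is phrased purely in terms of the local existence theorem but requires checking that the glued function satisfies the integral identity of Definition \ref{defsol} across the junction; your version avoids that verification because the limit is obtained in one pass on the whole half-line. Either approach is acceptable.
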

The proof of Corollary \ref{ext5} is subsequent to the one provided in \cite[Corollary 3.2]{dacosta1}.Consequently, we can omit it and establish that $f$ is a solution to \eqref{drbk}-\eqref{drbkic} on $[0, +\infty)$. Hence, the proof of Theorem \ref{thm2.1} is complete.

\section{Proof of Theorem \ref{thm2.2}}
In this section, our goal is to establish the existence of at least one solution to the RBK coagulation system \eqref{drbk}-\eqref{drbkic} in the $X_0$-norm by assuming that the coagulation kernel $(\theta_{i,j})$ satisfies \eqref{ker1} and \eqref{ker3} for all $i,j\ge 1$. We will utilize the Helly's selection theorem to demonstrate the compactness of the sequence of solutions to \eqref{tdrbk}-\eqref{tdrbkic}. To accomplish this, we need to prove Lemma \ref{l3.1} for an initial condition in the $X_0$-norm. It is evident that in Lemma \ref{l3.1}, $\Pi$ does not depend on the $X_1$-norm (but only on the $X_0$-norm). Hence, by following similar arguments, we prove Theorem \ref{thm2.1}. As a result, the Helly's selection theorem guarantees that there exists a subsequence $(f_i^{n_k})$ of $(f_i^n)_{n \ge i}$ converging pointwise to a $BV$ function $(f_i)$ in $([0,T])$ for each $i \ge 1$ and $T \in (0, +\infty)$ i.e.
\begin{eqnarray}\label{conv2.2}
	f_i^{n_k}(t) \to f_i(t) \ \ \ \mbox{as} \ k \to \infty, \ \forall t\in [0,T], \forall i\ge 1.
\end{eqnarray}

Let us consider $t \in(0, +\infty)$ and $r \ge i$. Now, for $n_k \ge r$, it can be deduced from \eqref{eqnum} that
\begin{align*}
\sum_{i=1}^{r}f_i^{n_k}(t) \le \|f^{\text{in}}\|_0,
\end{align*} 
and \eqref{lt2} implies
\begin{align*}
\sum_{i=1}^{r} i f_i^{n_k}(t)\le \frac{2}{R} \|f_i^{\text{in}}\|_0^{1/2} t^{-1/2}.
\end{align*}

Let us apply $k \to +\infty$ in the preceding two inequalities. Then, with the assistance of \eqref{conv2.2}, we obtain
\begin{align*}
\sum_{i=1}^{r}f_i(t) \le \|f^{\text{in}}\|_0, \ \
\text{and} \ \ \
\sum_{i=1}^{r} i f_i(t)\le \frac{2}{R} \|f_i^{\text{in}}\|_0^{1/2} t^{-1/2}.
\end{align*}

 Since the estimates above are valid for each $r \ge 1$ and the right-hand side is independent of $r$, we get
\begin{align*}
\|f(t)\|_0=\sum_{i=1}^{\infty}f_i(t) \le \|f^{\text{in}}\|_0,
\end{align*} 
and
\begin{align}\label{mass2.2}
\|f(t)\|_1=\sum_{i=1}^{\infty} i f_i(t)\le \frac{2}{R} \|f_i^{\text{in}}\|_0^{1/2} t^{-1/2},
\end{align}
which implies \eqref{mass2}.

We use \cite[Section 5]{Laurencot0} to follow a similar procedure to establish a stronger version of  Theorem \ref{thm2.2} in which \eqref{mass2} is replaced by \eqref{mass2.2}, written as

\begin{proposition}\label{prop2.2}
Let us assume that \eqref{ker1} and \eqref{ker3} holds for coagulation kernel $\theta_{i,j}$ along with initial data $f^{\text{in}}=(f_i^{\text{in}})_{i\ge 1} \in X_0^+$. Then there exists at least one solution $f$ to \eqref{drbk}-\eqref{drbkic} on $[0, +\infty)$, such that $f(t) \in X_0^+$ for each $t\in [0, +\infty)$, and
\begin{align*}
\|f(t)\|_1=\sum_{i=1}^{\infty} i f_i(t)\le \frac{2}{R} \|f_i^{\text{in}}\|_0^{1/2} t^{-1/2}, \ \ \ \ t>0.
\end{align*}
\end{proposition}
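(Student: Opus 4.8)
The plan is to recognise that the heavy lifting---the compactness of the truncated solutions and the two a priori bounds---has already been carried out in the proof of Theorem~\ref{thm2.2}. From there we inherit the pointwise convergence \eqref{conv2.2}, the uniform bound $\|f(t)\|_0\le\|f^{\text{in}}\|_0$, and the temporal decay \eqref{mass2.2}, which is exactly the estimate asserted in the proposition. What remains is therefore not the bound itself but the verification that the limit $f$ is an admissible solution in the sense of Definition~\ref{defsol}; that is, that $f$ satisfies the integral equation together with condition~(2) of that definition. I would obtain this by transplanting the convergence arguments of Section~4 to the present $X_0$ setting.

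The single genuine difficulty is that the convergence of the bilinear gain and loss terms in Section~4---namely Lemma~\ref{Convl2} and the limits \eqref{1term}--\eqref{2term}---rested on the $L^2$ tail control of $\sum_{j\ge r}\omega_j f_j^{n_k}$, whose right-hand side in \eqref{eqL2} and \eqref{3*} is proportional to $\|f^{\text{in}}\|_1$. For data merely in $X_0^+$ this quantity may be infinite, so those estimates cannot be used verbatim. The key observation is that the decay bound \eqref{lt2} repairs precisely this defect away from the origin: for any fixed $t_1>0$ the first moment at time $t_1$ is finite, and by the Cauchy--Schwarz inequality
\[
\sum_{i=1}^{n}i^{1/2}f_i^n(t_1)\le\Big(\sum_{i=1}^{n}i f_i^n(t_1)\Big)^{1/2}\Big(\sum_{i=1}^{n}f_i^n(t_1)\Big)^{1/2}\le\Big(\tfrac{2}{R}\Big)^{1/2}\|f^{\text{in}}\|_0^{3/4}\,t_1^{-1/4},
\]
where I have used \eqref{lt2} and \eqref{eqnum}. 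Substituting this finite bound in place of $\|f^{\text{in}}\|_1$ in \eqref{eqL2} (and in \eqref{3*}) furnishes, on every interval $[t_1,T]$ with $0<t_1<T$, the same tail estimates that drove Section~4.

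With these localised estimates in hand, I would repeat the proof of Lemma~\ref{Convl2} and of \eqref{1term}--\eqref{2term} on the interval $[t_1,T]$, the only change being that $\|f^{\text{in}}\|_1$ is everywhere replaced by the finite quantity $(2/R)^{1/2}\|f^{\text{in}}\|_0^{3/4}t_1^{-1/4}$ produced above. This yields the $L^1(t_1,T)$ convergence of both summations appearing in \eqref{tdrbk}, and hence, after passing $k\to\infty$ in the $i$-th truncated equation, shows that $f$ satisfies the integral equation of Definition~\ref{defsol} on $[t_1,T]$ and that condition~(2) holds there. Since $t_1>0$ is arbitrary, $f$ solves \eqref{drbk}--\eqref{drbkic} on $(0,T]$ for every $T\in(0,+\infty)$, and I would extend it to $[0,+\infty)$ exactly as in Corollary~\ref{ext5}; combining this with \eqref{mass2.2} gives the asserted decay.

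The step I expect to be the main obstacle is precisely this localisation away from $t=0$. Because the controlling constant blows up like $t_1^{-1/4}$ as $t_1\to0^+$, one must fix $t_1$ first, carry out the truncation limit $k\to\infty$ at that fixed $t_1$, and only afterwards let $t_1\to0^+$; reversing the order of these limits would be illegitimate, and keeping careful track of this is the delicate bookkeeping on which the argument hinges. This is also the point at which the approach genuinely departs from Section~4, where the finiteness of $\|f^{\text{in}}\|_1$ made a uniform-in-$t$ tail estimate available for free.
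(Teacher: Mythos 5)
Your proposal is correct and follows essentially the same route as the paper: the decay estimate itself is obtained exactly as in the proof of Theorem~\ref{thm2.2} by passing to the limit in \eqref{lt2} and \eqref{eqnum}, while the admissibility of the limit is handled by localising the tail estimates of Section~4 away from $t=0$ via \eqref{lt2}, which is precisely the procedure of \cite[Section~5]{Laurencot0} that the paper invokes without spelling out. In fact your Cauchy--Schwarz bound $\sum_i i^{1/2}f_i^n(t_1)\le(2/R)^{1/2}\|f^{\text{in}}\|_0^{3/4}t_1^{-1/4}$ and the insistence on taking $k\to\infty$ at fixed $t_1>0$ before letting $t_1\to0^+$ supply details that the paper leaves implicit.
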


Indeed, the presented argument provides an alternative proof demonstrating that the mass will decrease over time, even when the initial density $|f^{\text{in}}|_1$ of clusters is infinite.

\begin{remark}
This paper focuses on constructing global solutions to the discrete RBK coagulation equations \eqref{drbk}-\eqref{drbkic} with coagulation rates that satisfy \eqref{ker1} and either \eqref{ker2} or \eqref{ker3}. The primary concern then is whether these solutions are unique. According to our knowledge, this remains an unresolved issue for coagulation rates satisfying $\theta_{i,j} \ge K(ij)^{\alpha}$ for some $\alpha > \frac{1}{2}$. While for $\theta_{i,j} \le K(ij)^{\alpha}$ with $0 \le \alpha \le \frac{1}{2}$, the uniqueness is shown in \cite{dacosta1}.
\end{remark}

\section*{Future Scope}
\begin{itemize}
	\item The mass decreases with time in both the discrete and the continuous RBK
	models, so there is no mass-conserving solution. There might be a possibility
	to construct mass-conserving solutions if suitable source term is associated
	with the RBK model.
	
	\item The existence of self similar solution to the discrete RBK coagulation equation
	is shown only for the class of constant kernels. But, the results on the
	existence of self similar solution to the Smoluchowski coagulation equation are
	established for the classes of bounded and unbounded coagulation kernels.
\end{itemize}

\textbf{Acknowledgements:} The authors extends sincere thanks to the Indian Institute of Technology Roorkee, Roorkee, for providing the necessary facilities that have greatly contributed to the research endeavors.\\

\textbf{Ethical Approval: } All authors involved in the research have provided their consent for the publication of this paper.\\  
 
\textbf{Competing interests:} We declare that the authors have no conflict of interest.\\
 
\textbf{Authors' contributions:} All the authors have contributed equally.\\
 
\textbf{Funding:} Not available.\\
 
\textbf{Availability of data and materials: }Data sharing not applicable to this article as no datasets were generated or analyzed during the current study.


\begin{thebibliography}{10}
\bibitem{Ali}
{Ali, M. and Giri, A.K., Global existence of solutions to the discrete Safronov-Dubovski\v i coagulation equations and failure of mass-conservation, \textit{J. Math. Anal. Appl.}, 519(1), paper no. 126755, 2023.}

\bibitem{DtoC2}
{Bagland, V., Convergence of a discrete Oort-Hulst-Safronov equation, \textit{Math. Methods Appl. Sci.}, 28(13), pp. 1613–1632, 2005.}

\bibitem{ball}
{Ball, J.M. and Carr, J., The discrete coagulation-fragmentation equations: Existence, uniqueness, and density conservation, \textit{J. Stat. Phys.}, 61(1–2), pp. 203–234, 1990.}

\bibitem{Barik0}
{Barik, P.K., Giri, A.K. and Lauren\c{c}ot, P., Mass-conserving solutions to the Smoluchowski coagulation equation with singular kernel, \textit{Proc. R. Soc. Edinb. A: Math.}, 150(4), pp. 1805–1825, 2019.}

\bibitem{Barik1}
{Barik, P.K. and Giri, A.K., Global classical solutions to the continuous coagulation equation with collisional breakage, \textit{Z. Angew. Math. Phys.}, 71(1), 2020.}

\bibitem{dacosta1}
{Da Costa, F.P., Pinto, J.T. and Sasportes, R., The Redner–Ben-Avraham–Kahng Cluster System, \textit{S\~ao Paulo J. Math. Sci.}, 6(2), paper no. 171, 2012.}

\bibitem{dacosta2}
{Da Costa, F.P., Pinto, J.T. and Sasportes, R., The Redner–Ben-Avraham–Kahng coagulation system with constant coefficients: The finite-dimensional case, \textit{Z. Angew. Math. Phys.}, 66(4), pp. 1375–1385, 2014.}

\bibitem{Giri1}
 {Giri, A.K., Lauren\c{c}ot, P. and Warnecke, G., Weak solutions to the continuous coagulation equation with multiple fragmentation, \textit{Nonlinear Anal. Theory Methods Appl.}, 75(4), pp. 2199–2208, 2012.}
 
\bibitem{Redner2}
{Ispolatov, I., Krapivsky, P.L. and Redner, S., War: The Dynamics of Vicious Civilizations, \textit{Phys. Rev. E}, 54(2), pp. 1274–1289, 1996. }

\bibitem{Kesavan}
{Kesavan, S., Topics in Functional Analysis and Applications, third edition, New Age International Publishers, 2019.}

\bibitem{Lachowicz}
{Lachowicz, M., Lauren\c{c}ot, P. and Wrzosek, D., On the Oort-Hulst-Safronov coagulation equation and its relation to the smoluchowski equation, \textit{SIAM J. Math. Anal.}, 34(6), pp. 1399–1421, 2003.} 

\bibitem{Laurencot0}
{Lauren\c{c}ot, P., Global solutions to the discrete coagulation equations, \textit{Mathematika}, 46(2), pp. 433–442, 1999.}

\bibitem{Laurencot2}
{Lauren\c{c}ot, P., On a class of continuous coagulation-fragmentation equations, \textit{J. Differ. Equ.}, 167(2), pp. 245–274, 2000.} 

\bibitem{DtoC1}
{Lauren\c{c}ot, P. and Mischler, S., From the discrete to the continuous coagulation–fragmentation equations, \textit{Proc. R. Soc. Edinb. A: Math.}, 132(5), pp. 1219–1248, 2002.}

\bibitem{Laurencot1}
{Lauren\c{c}ot, P., Mischler, S., On coalescence equations and related models. In: Degond, P., Pareschi, L., Russo, G. (eds) Modeling and Computational Methods for Kinetic Equations. Modeling and Simulation in Science, Engineering and Technology. Birkhäuser, Boston, MA. 2004.}

\bibitem{Laurencot3}
{Banasiak, J. and Mokhtar-Kharroubi, M.,Evolutionary equations with applications in Natural Sciences. Cham: Springer, 2015.}

\bibitem{Muller}
{M\"{u}ller, H., Zur allgemeinen Theorie Ser Raschen koagulation, \textit{Kolloidchemische Beihefte}, 27(6–12), pp. 223–250, 1928.}

\bibitem{Redner1}
{Redner, S., Ben-Avraham, D. and Kahng, B., Kinetics of “Cluster Eating”, \textit{J. Phys. A: Math. Gen.}, 20(5), pp. 1231–1238, 1987.}

\bibitem{Smoluchowski}
{Smoluchowski, M. v., Versuch einer mathematischen Theorie der Koagulationskinetik Kolloider L\"{o}sungen, \textit{Zeitschrift f\"{u}r Physikalische Chemie}, 92U(1), pp. 129–168, 1918.}

\bibitem{stewart1}
{Stewart, I.W., A global existence theorem for the general coagulation-fragmentation equation with unbounded kernels, \textit{Math. Methods Appl. Sci.}, 11(5), pp. 627–648, 1989.}

\bibitem{stewart2}
{Stewart, I.W., A uniqueness theorem for the coagulation-fragmentation equation, \textit{Math. Proc. Camb. Philos. Soc.}, 107(3), pp. 573–578, 1990.}

\bibitem{ankik}
{Verma, P., Giri, A.K. and da Costa, F.P., The continuous Redner-Ben-Avraham-Kahng coagulation system: Well-posedness and asymptotic behaviour, \textit{Evol. Equ. Control Theory}, 12(5), pp. 1247–1267, 2023.}

\bibitem{ankik1}
{Verma, P., Giri, A. K., Existence of solutions to the continuous
Redner–Ben-Avraham–Kahng coagulation equation, arXiv:2307.01677, 2023.}


\bibitem{Vrabie}
{Vrabie, I.I.,Compactness methods for nonlinear evolutions. Harlow, 2nd ed., Essex England: Longman Scientific \& Technical, 1988. }
\end{thebibliography}
\end{document}